\documentclass[12pt]{amsart}
\usepackage{amsmath, amssymb, amsthm, latexsym}
\usepackage{amssymb,amscd,amsmath}
\usepackage{latexsym}
\usepackage[center]{caption}
\usepackage{tikz}
\newcounter{braid}
\newcounter{strands}

\DeclareMathAlphabet{\bsf}{OT1}{cmss}{bx}{n}

\pgfkeyssetvalue{/tikz/braid height}{1cm}
\pgfkeyssetvalue{/tikz/braid width}{1cm}
\pgfkeyssetvalue{/tikz/braid start}{(0,0)}
\pgfkeyssetvalue{/tikz/braid colour}{black}
\pgfkeys{/tikz/strands/.code={\setcounter{strands}{#1}}}

\makeatletter
\def\cross{%
  \@ifnextchar^{\message{Got sup}\cross@sup}{\cross@sub}}

\def\cross@sup^#1_#2{\render@cross{#2}{#1}}

\def\cross@sub_#1{\@ifnextchar^{\cross@@sub{#1}}{\render@cross{#1}{1}}}

\def\cross@@sub#1^#2{\render@cross{#1}{#2}}

\def\render@cross#1#2{
  \def\strand{#1}
  \def\crossing{#2}
  \pgfmathsetmacro{\cross@y}{-\value{braid}*\braid@h}
  \pgfmathtruncatemacro{\nextstrand}{#1+1}
  \foreach \thread in {1,...,\value{strands}}
  {
    \pgfmathsetmacro{\strand@x}{\thread * \braid@w}
    \ifnum\thread=\strand
    \pgfmathsetmacro{\over@x}{\strand * \braid@w + .5*(1 - \crossing) * \braid@w}
    \pgfmathsetmacro{\under@x}{\strand * \braid@w + .5*(1 + \crossing) * \braid@w}
    \draw[braid] \pgfkeysvalueof{/tikz/braid start} +(\under@x pt,\cross@y pt) to[out=-90,in=90] +(\over@x pt,\cross@y pt -\braid@h);
    \draw[braid] \pgfkeysvalueof{/tikz/braid start} +(\over@x pt,\cross@y pt) to[out=-90,in=90] +(\under@x pt,\cross@y pt -\braid@h);
    \else
    \ifnum\thread=\nextstrand
    \else
     \draw[braid] \pgfkeysvalueof{/tikz/braid start} ++(\strand@x pt,\cross@y pt) -- ++(0,-\braid@h);
    \fi
   \fi
  }
  \stepcounter{braid}
}

\tikzset{braid/.style={double=\pgfkeysvalueof{/tikz/braid colour},double distance=1pt,line width=2pt,white}}

\newcommand{\braid}[2][]{%
  \begingroup
  \pgfkeys{/tikz/strands=2}
  \tikzset{#1}
  \pgfkeysgetvalue{/tikz/braid width}{\braid@w}
  \pgfkeysgetvalue{/tikz/braid height}{\braid@h}
  \setcounter{braid}{0}
  \let\sigma=\cross
  #2
  \endgroup
}
\makeatother

\input xypic
\newtheorem{theorem}{Theorem}

\newtheorem{lemma}[theorem]{Lemma}

\makeatletter
\makeatother

\def\Z{\mathbb{Z}}

\def\C{\mathbb{C}}

\def\C{\mathbb{C}}

\def\N{\mathbb{N}}

\def\md{\mathcal{D}}

\def\qed{\hfill$\square$\medskip}

\def\Zpk{\mathbb{Z}/p^{k}}
\def\Zpk1{\mathbb{Z}/p^{k-1}}

\newcommand{\rref}[1]{(\ref{#1})}

\newcommand{\beg}[2]{\begin{equation}\label{#1}#2\end{equation}}
\def\r{\rightarrow}

\def\sl2{\widetilde{SL_{2}(\Z)}}

\def\md
\def\rank{\operatorname{rank}}

\title[Noether's problem for orientation $p$-subgroups]{Noether's problem for orientation $p$-subgroups of symmetric groups}
\author{Sophie Kriz}

\begin{document}
\maketitle

\begin{abstract}
We give a positive solution to Noether's rationality problem for certain index $p$ subgroups of the $p$-Sylow subgoups of symmetric groups.
\end{abstract}

\section{Introduction}

Let $G$ be a finite group acting linearly on an affine space over $\C$.
Emmy Noether \cite{noether} asked whether the quotient variety is always rational. A counterexample
was found by Saltman \cite{salt} (see also Bogomolov \cite{b87}), using an invariant called the unramified Brauer group. In a different form, 
this invariant was first discovered by Artin and Mumford \cite{am} who used it to give a different example of unirational varieties which are
not rational. (For a smooth projective variety over $\C$, the unramified Brauer group is isomorphic to the torsion in its third singular
cohomology group.) 
Colliot-Th\'el\`ene and Ojanguren \cite{cothe} and Peyre \cite{peyre}
found more examples of unirational non-rational varieties using a more general invariant
called unramified cohomology.
The question for which groups $G$ Noether's problem has a positive solution is still widely open except
in some special cases. For abelian groups, a positive solution was given by Fischer \cite{fischer}. For 
groups of order $p^n$, $n\leq 4$, it was solved positively by Chu and Kang \cite{ckang}. 
It is proved in \cite{KWZ}, Theorem 1.4, that for $p$-Sylow subgroups of
symmetric groups, Noether's problem also has a positive solution (see \cite{bobo11,bobo12}), and some other
cases are also known \cite{maeda,ckang30,ckang10,ckang20,ckang1}.
The most famous
case of Noether's problem is $G=A_n$ (the rationality of the discriminant variety).  It is known to be true for $n\leq 5$
(the most interesting case is Maeda's Theorem \cite{maeda} for $n=5$), but is still open for $n\geq 6$.
It was proved by Bogomolov and Petrov \cite{bop11} that unramified cohomology is $0$ in this case.

\vspace{3mm}
In this note, we consider Noether's problem for a class of index $p$ subgroups of $p$-Sylow subgroups of
symmetric groups. For our purposes, it is convenient to choose certain particular generators of those subgroups. 
A $p$-Sylow subgroup of the symmetric group $\Sigma_n$ on $\{1,\dots,n\}$ has generators $\sigma_{i,s}$
which cyclically permute $p$ consecutive blocks of $p^i$ consecutive numbers $1,\dots, n$, starting with a number
congruent to $1\mod p^{i+1}$. The group $H_{n,p}$ is the index $p$ subgroup generated by $\sigma_{0,s}\sigma_{0,
s^\prime}^{-1}$ and $\sigma_{i,s}$ for $i>0$. For more detail on these groups, see Section \ref{snot} below.

The group $H_{8,3}$ is
the $3$-Sylow subgroup of the group of positions of the corners of 
Rubik's cube. Because of this, we call $H_{n,p}$ the {\em orientation $p$-subgroup of 
$\Sigma_n$}. 

\vspace{3mm}
The following is our main theorem.

\begin{theorem}\label{ts}
For a field $F$ of characteristic $0$ containing $p$'th roots of unity, let $H_{n,p}$ act on 
$$F(x_1,\dots ,x_n)$$
by permuting the $x_i$'s. Then for every prime $p\geq 2$,
the field of fixed points $F(x_1,\dots ,x_n)^{H_{n,p}}$ is rational over $F$.
\end{theorem}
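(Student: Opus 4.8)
The plan is to exploit that $H=H_{n,p}$ is normal of index $p$ in the full $p$-Sylow subgroup $P=\mathrm{Syl}_p(\Sigma_n)$, so $P/H\cong\Z/p$, and to combine this with the rationality of $F(x_1,\dots,x_n)^P$ from \cite{KWZ}. I focus first on $n=p^k$ and induct on $k$. Grouping $\{1,\dots,p^k\}$ into $m=p^{k-1}$ blocks of $p$ consecutive indices and using a primitive $p$-th root of unity $\zeta\in F$, I diagonalize the bottom level $N=\langle\sigma_{0,s}\rangle\cong(\Z/p)^m$ by the block transform $y_{s,j}=\sum_{l=0}^{p-1}\zeta^{jl}x_{(s-1)p+l+1}$; then $\sigma_{0,s}$ scales $y_{s,j}$ by $\zeta^j$ and fixes the other blocks, while the higher generators, which generate $P/N\cong G_{k-1}:=\mathrm{Syl}_p(\Sigma_{p^{k-1}})$, permute the index $s$ and preserve $j$.

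The difference between $H$ and $P$ is concentrated in a single element. Since $\zeta\in F$, the degree-$p$ extension $F(x)^H/F(x)^P$ is Kummer, generated by a $P/H$-eigenvector; one checks $w=\prod_{s=1}^m y_{s,1}$ works, as it is fixed by $N_0=H\cap N=\{(a_s):\sum_s a_s\equiv0\}$ and by the block permutations, while $\sigma_{0,1}(w)=\zeta w$, so $F(x)^H=F(x)^P(w)$ with $w^p=\prod_s y_{s,1}^p$. Concretely I take $N_0$-invariants first: $F(x)^{N_0}=F(x)^N(w)$ is rational, and on it $G_{k-1}=H/N_0$ acts, so $F(x)^H=(F(x)^{N_0})^{G_{k-1}}$. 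Writing $T_s=y_{s,1}^p$, the coordinates of $F(x)^{N_0}$ organize into $G_{k-1}$-permutation layers — the block sums $y_{s,0}$, the invariant monomials $y_{s,j}y_{s,1}^{-j}$ for $2\le j\le p-1$, and the $T_s$ — together with $w$, which satisfies $w^p=\prod_s T_s$ and is $G_{k-1}$-fixed. The layers not carrying $w$ are genuine permutation representations and can be peeled off by the no-name lemma (a $G_{k-1}$-equivariant vector bundle over the generically free quotient of the core is rationally trivial), reducing the theorem for $n=p^k$ to the rationality of the multiplicative core.

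The heart of the argument, and what I expect to be the main obstacle, is then the following Crux Lemma: for $G=G_{k-1}$ acting by permutations on $F(T_1,\dots,T_m)$ and $\Delta=\prod_s T_s$, the field $F(T_1,\dots,T_m)^G(\Delta^{1/p})$ is rational over $F$. This is a multiplicative-invariant statement for the overlattice $\tilde\Lambda=\Z[\Omega]+\Z\cdot\tfrac1p\mathbf1$, where $\Omega=\{1,\dots,m\}$ carries the permutation action and $\mathbf1$ is the all-ones vector; equivalently it is the rationality of the torus quotient $T_{\tilde\Lambda}/G$. One cannot hope that $\Delta$ itself is part of a purely transcendental basis of $F(T)^G$: already for $p=3$, $k=2$ the corresponding fibration is elliptic. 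What rescues the statement is that $\Delta^{1/p}$ is a good coordinate; substituting $r_s=T_s/\Delta^{1/p}$ lowers the dimension, and for $p=3$, $k=2$ it yields exactly the $\Z/3$-action on the rank-two root lattice $A_2$, whose torus is rational by Voskresenskii's theorem on tori of dimension $\le2$.

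For general $k$ I would prove the Crux Lemma by a second induction that runs the same bottom-level diagonalization inside the multiplicative problem. The dual lattice sits in an exact sequence $0\to\tilde\Lambda^*\to\Z[\Omega]\oplus\Z\to\Z\to0$ with permutation outer terms, so $\tilde\Lambda^*$ is quasi-permutation and the torus is at least stably rational; the delicate point — where the specific wreath-product combinatorics of $G$ must be used — is upgrading stable rationality to honest rationality, which I expect to handle by an explicit permutation resolution together with Voskresenskii's theorem in the low-dimensional base cases. Finally, general $n$ follows the same scheme: in base $p$ the Sylow subgroup is a product of iterated wreath products, $H_{n,p}$ is cut out by the single global relation $\sum_s a_s\equiv0$ among all level-zero generators, and the diagonalization produces one global Kummer element $w=\prod_{\text{all blocks}}y_{s,1}$, reducing to the product form of the Crux Lemma for $\mathrm{Syl}_p(\Sigma_n)$.
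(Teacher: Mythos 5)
Your reduction is, in its skeleton, exactly the paper's: Fourier-diagonalizing the bottom level via $\zeta_p$, observing that the remaining coordinates carry an affine/permutation action over the core and can be stripped off by Theorem 1 of Hajja--Kang \cite{hajjak} (your ``no-name'' step), and reducing the theorem to the rationality of $F(T_1,\dots,T_m)^{G_{m,p}}(\Delta^{1/p})$ with $\Delta=T_1\cdots T_m$. Your ``Crux Lemma'' is precisely the paper's Lemma \ref{lwqm} in the case $a=1$. But at the crux you stop: you verify only the lowest case ($p=3$, rank two, via Voskresenskii), and for the general case you offer the quasi-permutation resolution $0\to\tilde\Lambda^*\to\Z[\Omega]\oplus\Z\to\Z\to 0$, which yields at best \emph{stable} rationality, together with ``I expect to handle'' for the upgrade to honest rationality. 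That upgrade \emph{is} the theorem, and nothing in your sketch supplies it. The paper fills exactly this hole with Hajja's monomial automorphism lemma (\cite{Hajjamon}, Lemma 2(iv), stated as Lemmas \ref{tw} and \ref{thajja}): if $\zeta_p\in F$, then $F(T_1,\dots,T_p)^{\Z/p}$, for the cyclic permutation action, is rational over $F(\Delta)$. Applied blockwise to the bottom-level cycles $\sigma_{0,s}$, this puts the invariant field back into the same twisted-product shape with fresh variables permuted by $G_{m,p}$, which Hajja--Kang then strips off, so the induction descends the wreath tower. Your proposed ``second induction running the same bottom-level diagonalization inside the multiplicative problem'' does not run as stated: diagonalization is a linear operation with no direct analogue for multiplicative coordinates on the $T$-torus, and it is exactly Hajja's lemma (proved in the paper by the Chu--Kang eigenvector method after adjoining $w^{1/p}$) that substitutes for it.

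Two further concrete defects in the plan. First, your Crux Lemma as stated is too weak to support its own induction: when the number of blocks is not divisible by $p$, folding the leftover $T$'s into the constant field twists the Kummer element, so the inductive statement must allow $F(t_1\cdots t_k\,a,\,t_1^p,\dots,t_{k-1}^p)$ with an arbitrary $a$ satisfying $a^p\in F$ --- this is precisely why the paper's Lemma \ref{lwqm} (and already Lemma \ref{llllemma} for $p=2$) carries the parameter $a$, and your untwisted formulation would break at the first $k\not\equiv 0 \bmod p$. Second, your claim that ``one cannot hope that $\Delta$ itself is part of a purely transcendental basis of $F(T)^G$ --- already for $p=3$, $k=2$ the corresponding fibration is elliptic'' is false: for $p=3$, $k=2$ the group is the cyclic $\Z/3$, and Hajja's lemma says exactly that $F(T_1,T_2,T_3)^{\Z/3}$ \emph{is} rational over $F(\Delta)$ once $\zeta_3\in F$. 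Far from being an obstruction, this fact is the engine of the entire proof. So: correct skeleton, essentially the paper's route, but the load-bearing lemma is asserted rather than proved, and the inductive formulation needs the $a$-twist to close.
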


Note that $H_{n,2}$ is a $2$-Sylow subgroup of the alternating group $A_n$. Therefore, one obtains
as a corollary a simple proof of the result of Bogomolov and Petrov \cite{bop11}:

\begin{theorem} \label{tbop}
(\cite{bop11}) We have
$H_{nr}^k(\C(a_1,\dots,a_n)^{A_n},\Z/\ell)=0$.
\end{theorem}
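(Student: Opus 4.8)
The plan is to deduce the vanishing from the rationality results, using the fact that unramified cohomology with finite coefficients is a stable birational invariant which vanishes in positive degrees for rational varieties, together with a standard transfer argument reducing the computation to Sylow subgroups. Throughout I let $A_n$ act on $\C(a_1,\dots,a_n)$ by permuting the $a_i$, and I treat the interesting range $k\geq 1$ (in degree $0$ the statement is about the coefficient group and is vacuous). Over the algebraically closed field $\C$ the coefficients may be taken untwisted, so $\Z/\ell$ is the only module in play.

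First I would reduce to prime-power coefficients. Since $\Z/\ell\cong\bigoplus_{q\mid\ell}\Z/q^{v_q(\ell)}$ and $H_{nr}^k$ is additive in its coefficients, it suffices to prove that $H_{nr}^k(\C(a_1,\dots,a_n)^{A_n},\Z/q^{m})=0$ for each prime $q\mid\ell$, where $m=v_q(\ell)$. I would then fix such a prime $q$ and choose a $q$-Sylow subgroup $P\leq A_n$. The inclusion of fixed fields $\C(a_1,\dots,a_n)^{A_n}\subseteq\C(a_1,\dots,a_n)^{P}$ is finite of degree $[A_n:P]$, which is prime to $q$. Because restriction and corestriction preserve unramified classes and satisfy $\mathrm{cor}\circ\mathrm{res}=[A_n:P]\cdot\mathrm{id}$, and because multiplication by $[A_n:P]$ is invertible on $\Z/q^{m}$, the restriction map
$$H_{nr}^k(\C(a_1,\dots,a_n)^{A_n},\Z/q^{m})\longrightarrow H_{nr}^k(\C(a_1,\dots,a_n)^{P},\Z/q^{m})$$
is injective. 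Hence it is enough to show that the target group vanishes, for which I would prove that $\C(a_1,\dots,a_n)^{P}$ is rational over $\C$.

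For the rationality of $\C(a_1,\dots,a_n)^{P}$ I would split into two cases. When $q=2$, the group $P=H_{n,2}$ is a $2$-Sylow subgroup of $A_n$ (as noted above), so Theorem \ref{ts} applied with $F=\C$ and $p=2$ gives rationality directly. When $q$ is odd, the index $[\Sigma_n:A_n]=2$ is prime to $q$, so a $q$-Sylow subgroup of $A_n$ is simultaneously a $q$-Sylow subgroup of $\Sigma_n$; rationality of its fixed field is then exactly \cite{KWZ}, Theorem 1.4. In either case $\C(a_1,\dots,a_n)^{P}$ is purely transcendental over $\C$, so by the invariance of unramified cohomology under adjoining independent variables and its triviality in positive degrees over the algebraically closed field $\C$ we get $H_{nr}^k(\C(a_1,\dots,a_n)^{P},\Z/q^{m})=0$ for $k\geq 1$. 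Combining this with the injectivity of the restriction map, and summing over the primes $q\mid\ell$, yields the theorem.

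The step I expect to require the most care is the transfer argument itself: one must verify that restriction and corestriction on the Galois cohomology of these function fields genuinely carry unramified classes to unramified classes and compose to multiplication by the degree. This is classical (see Colliot-Th\'el\`ene's lectures on unramified cohomology), so the only genuinely new input is Theorem \ref{ts}, which supplies precisely the $q=2$ case that the symmetric-group result \cite{KWZ} does not cover.
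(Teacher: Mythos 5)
Your proposal is correct and follows essentially the same route as the paper: a restriction--corestriction transfer argument reducing to Sylow subgroups, with rationality of $\C(a_1,\dots,a_n)^{H_{n,2}}$ supplied by Theorem \ref{ts} for $q=2$, rationality for odd-Sylow fixed fields supplied by \cite{KWZ} (since a $q$-Sylow subgroup of $A_n$ is one of $\Sigma_n$ for odd $q$), and vanishing of unramified cohomology of rational fields from \cite{cothe}. The only cosmetic difference is bookkeeping: you split the coefficients into $q$-primary summands to make each restriction injective, while the paper keeps $\Z/\ell$ and observes that the fixed group is annihilated by the mutually coprime indices $[A_n:G_p]$, hence zero --- and note that neither argument actually needs corestriction to preserve unramified classes, since the norm is only applied to a class whose restriction already vanishes.
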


For $p=2$, there is an elementary proof of Theorem \ref{ts} based on the fact by letting 
the commuting elements $\sigma_{1,s}$
act
on the (twisted) torus corresponding to the lattice $D_m^*$ (dual to the root lattice of type $D_m$ algebraic groups), the 
quotient variety is birationally equivalent 
to the product of an affine space with another such (twisted)
torus equivariantly with respect to the permutations $\sigma_{i,s}$
for $i>1$. This, in turn, is basically due to the simple fact that $\Sigma_2=\Z/2$. (See Comment below the 
proof of Lemma \ref{llllemma} in Section \ref{p2} below.) Since this proof is short and easy,
we present it first in Section \ref{p2} below.

\vspace{3mm}

For $p>2$, the proof of Theorem \ref{ts} is given in Section \ref{sp}. It is based on the following result (used in the proof of Lemma \ref{lwqm} in Section \ref{sp} below):

\begin{lemma}\label{tw}
If $F$ contains $p$'th roots of unity, and on $F(x_1,\dots,x_p)$, $\Z/p$ acts by
\beg{1tw}{
x_1\mapsto x_2 \mapsto\dots x_{p}\mapsto x_1,}
then $F(x_1,\dots,x_p)^{\Z/p}$ is rational over $F(w)$ where $w$ denotes the product
$x_1\cdot\dots\cdot x_p$.
\end{lemma}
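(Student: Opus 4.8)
The plan is to avoid the additive diagonalization by the Lagrange resolvents $\sum_i\zeta^{ij}x_i$: although it immediately shows $F(x_1,\dots,x_p)^{\Z/p}$ is rational over $F$, it expresses $w$ as a polynomial of degree $p-1$ in the resolvent powers and interacts badly with the base $F(w)$ (already for $p=3$ one is left with a cubic relation). Instead I would exploit the multiplicative structure together with Hilbert's Theorem 90. First pass to the ratios $r_i=x_{i+1}/x_i$ for $1\le i\le p-1$ and set $r_p=(r_1\cdots r_{p-1})^{-1}$, so that $F(x_1,\dots,x_p)=R(x_1)$ with $R=F(r_1,\dots,r_{p-1})$ and $x_1$ transcendental over $R$. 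The generator $\sigma$ of $\Z/p$ then cyclically permutes $r_1,\dots,r_p$ (hence acts on $R$), while $\sigma(x_1)=r_1x_1$; and since $x_i=x_1r_1\cdots r_{i-1}$ one computes that $w=x_1^p\prod_{k=1}^{p-1}r_k^{\,p-k}$ is visibly $\sigma$-invariant.

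The key step is Hilbert 90. The cyclic permutation of the $r_i$ is faithful, so with $R_0:=R^{\Z/p}$ the extension $R/R_0$ is cyclic Galois of degree $p$. The norm of $r_1$ is $\prod_{i=0}^{p-1}\sigma^i(r_1)=\prod_{k=1}^{p}r_k=1$, so there is $s\in R^\times$ with $\sigma(s)=r_1s$. Then $\tilde x:=x_1/s$ satisfies $\sigma(\tilde x)=\sigma(x_1)/\sigma(s)=r_1x_1/(r_1s)=\tilde x$, it is transcendental over $R_0$, and it generates $R(x_1)$ over $R$; hence
$$F(x_1,\dots,x_p)^{\Z/p}=R_0(\tilde x),$$
a rational function field in the single variable $\tilde x$ over $R_0$.

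It remains to bring in $w$. Since $\tilde x^p$ and $w$ are both $\sigma$-invariant, $\mu:=w/\tilde x^p=s^p\prod_k r_k^{\,p-k}$ lies in $R_0^\times$, and $w=\mu\,\tilde x^p$. The field $R_0$ is the function field of the quotient of the $(p-1)$-dimensional norm-one torus $\{\prod_{i=1}^p r_i=1\}$ by the cyclic permutation, and is rational over $F$. The clean reduction is then the following: if $R_0$ is rational over its subfield $F(\mu)$, say $R_0=F(\mu)(\eta_1,\dots,\eta_{p-2})$, then
$$R_0(\tilde x)=F(\mu,\eta_1,\dots,\eta_{p-2},\tilde x)=F(w,\tilde x,\eta_1,\dots,\eta_{p-2}),$$
using $F(\mu,\tilde x)=F(w,\tilde x)$ (from $\mu=w\tilde x^{-p}$), and the right-hand side is purely transcendental over $F(w)$, which proves the lemma.

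The main obstacle is exactly this last point: showing $R_0$ is rational over $F(\mu)$, equivalently that the hypersurface $\tilde x^p\mu(\eta)=w$ is rational over $F(w)$. The subtlety is that the cyclic permutation lattice $\Z^p/\Z(1,\dots,1)$ carries no nontrivial invariant Laurent monomials, so $R_0$ is not toric in monomial coordinates and $\mu$ is not visibly a coordinate; one must control the class of $\mu$ in $R_0^\times/(R_0^\times)^p$ (which is nontrivial, since $w$ is not a $p$-th power in the invariant field). I would attack it either by computing the Hilbert 90 element $s$ explicitly and exhibiting $\mu$, after absorbing $p$-th powers into $\tilde x$, as a coordinate times a $p$-th power, or by iterating the ratio/Hilbert-90 reduction one rank lower so that $\mu$ plays the role of the radial variable of the smaller torus; the verification that the class of $\mu$ is trivial enough to serve as a coordinate is where the real work lies.
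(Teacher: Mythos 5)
Your reduction is correct as far as it goes, but it is not a proof: the entire content of the lemma is concentrated in the step you explicitly leave open. Passing to the ratios $r_i=x_{i+1}/x_i$, invoking Hilbert 90 to write $\sigma(s)=r_1s$ and splitting off the invariant variable $\tilde x=x_1/s$ is a correct repackaging, and so is the identity $w=\mu\tilde x^p$ with $\mu=s^p\prod_k r_k^{p-k}\in R_0^\times$. But the remaining claim --- that $R_0$ is rational over $F(\mu)$ --- is essentially the original statement one more time, not a smaller problem: rationality of $R_0$ over $F$ is indeed known (it is the fixed field of a cyclic monomial action on the norm-one torus), whereas rationality over the \emph{specific} subfield $F(\mu)$ requires exhibiting $\mu$ as a free generator, and $\mu$ involves the Hilbert-90 element $s$, which is non-explicit; since the permutation lattice $\Z^p/\Z(1,\dots,1)$ has no nonzero invariants, there is no monomial coordinate system on $R_0$ in which $\mu$ is visible, and controlling its class in $R_0^\times/(R_0^\times)^p$ is exactly the difficulty you name. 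Your two suggested attacks (computing $s$ explicitly, or iterating the reduction one rank lower) are proposals, not arguments; as written, the proof has a genuine gap at its decisive step.

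It is instructive to compare with how the paper closes precisely this gap. Instead of hiding the twist in an inexplicit $s$, the paper adjoins $w^{1/p}$ and introduces a second generator $g_2$ scaling $w^{1/p}$ by $\zeta_p^{-1}$, so that $K=L^{\Z/p\times\Z/p}$ with $L=F(x_1,\dots,x_p,w^{1/p})$. In the coordinates $\bar x_i=x_i/w^{1/p}$ (with $\prod\bar x_i=1$) the cyclic action is computed \emph{explicitly} by the Chu--Kang elements $u_i=\sum_{j}\zeta_p^{ij}\,\bar x_1\cdots\bar x_j$, whose ratios $v_i=u_i^{-1}u_{i+1}^2u_{i+2}^{-1}$ generate $L^{\Z/p\{g_1\}}=F(w^{1/p})(v_0,\dots,v_{p-2})$; the residual $g_2$-action on the $v_i$ is monomial, is linearized by the substitution $z=1+v_0+v_0v_1+\cdots+v_0\cdots v_{p-2}$, and is then killed by Theorem 1 of Hajja--Kang, which applies exactly because $g_2$ acts faithfully on the base $F(w^{1/p})$; taking $g_2$-invariants lands in $F(w)(\bar z_1,\dots,\bar z_{p-1})$. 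In other words, the Kummer extension by $w^{1/p}$ plus the Hajja--Kang descent is the device that performs the bookkeeping you deferred; to complete your argument you would need either that device or an explicit construction of $s$ making $\mu$ a coordinate, and neither is supplied.
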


This Lemma is a corollary of the following result of Hajja:

\begin{lemma}(\cite{Hajjamon}, Lemma 2 (iv), p.244)\label{thajja}
Let $p$ be an odd prime number, $k$ a field of characteristic not equal to $p$. Assume that $k$ contains a
primitive $p$'th root of unity. If $a\in k^\times$, and $\sigma$ acts on the rational function field $k(x_1,\dots,x_{p-1})$
by
$$\sigma:x_1\mapsto x_2\mapsto\dots\mapsto x_{p-1}\mapsto a/(x_1\dots x_{p-1}),$$
then the fixed field $k(x_1,\dots,x_{p-1})^{\langle\sigma\rangle}$ is rational over $k$.
\end{lemma}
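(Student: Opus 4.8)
The plan is to reduce the twisted cyclic action to a genuinely \emph{linear} (diagonal) one by first symmetrizing, and then to isolate the role of the constant $a$ as a single slicing condition. Fix a primitive $p$'th root of unity $\zeta\in k$. Introduce the auxiliary variable $x_p:=a/(x_1\cdots x_{p-1})$, so that $\sigma$ becomes the cyclic permutation $x_1\mapsto x_2\mapsto\cdots\mapsto x_p\mapsto x_1$ and the field in question is $k(x_1,\dots,x_{p-1})=L/(e_p-a)$, where $L=k(x_1,\dots,x_p)$ and $e_p=x_1\cdots x_p$. Since $e_p$ is $\sigma$-invariant and the surjection $L\twoheadrightarrow k(x_1,\dots,x_{p-1})$, $x_p\mapsto a/(x_1\cdots x_{p-1})$, is $\sigma$-equivariant, Hajja's field is the specialization of $L^{\langle\sigma\rangle}$ at $e_p=a$. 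Thus it suffices to show $L^{\langle\sigma\rangle}$ is rational over $k(e_p)$, with a transcendence basis having good reduction at $e_p=a$; uniformity in $a$ is exactly what one needs, since in the intended application (the proof of Lemma \ref{tw}) the value $a$ is transcendental.

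The key device is the additive Lagrange resolvent. For $0\le j\le p-1$ set
\[
\theta_j=\sum_{i=1}^{p}\zeta^{ij}x_i .
\]
The Vandermonde matrix $(\zeta^{ij})$ is invertible over $k$ (here $p$ is invertible), so $(\theta_0,\dots,\theta_{p-1})$ is a $k$-linear coordinate system on $L$, and a one-line computation gives $\sigma(\theta_j)=\zeta^{-j}\theta_j$: the action is now \emph{diagonal}. Writing $v_j=\theta_j\theta_1^{-j}$ for $2\le j\le p-1$, the element $\theta_1$ is a Kummer generator over the fixed subfield $E=k(\theta_0,v_2,\dots,v_{p-1})$, whence $L^{\langle\sigma\rangle}=E(\theta_1^{p})=k(\theta_0,\theta_1^{p},v_2,\dots,v_{p-1})$ is rational with these $p$ algebraically independent generators. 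Substituting the inverse transform $x_i=\tfrac1p\sum_j\zeta^{-ij}\theta_j$ into $e_p$ and collecting by weight presents $e_p$ as an explicit $\sigma$-invariant polynomial $Q$ in $\theta_0$, $s:=\theta_1^{p}$ and $v_2,\dots,v_{p-1}$, so that the fixed field is
\[
k(x_1,\dots,x_{p-1})^{\langle\sigma\rangle}\;\cong\;\operatorname{Frac}\,k[\theta_0,s,v_2,\dots,v_{p-1}]\big/(Q-a),
\]
a finite cover of the purely transcendental field $k(\theta_0,v_2,\dots,v_{p-1})$ of the correct transcendence degree $p-1$.

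The hard part is the final step: proving this cover is rational, that is, handling the constant $a$. The difficulty is structural, and I would emphasize it as the main obstacle: on the torus $x_1\cdots x_p=a$ no Laurent monomial, even up to a constant, is a $\sigma$-eigenvector with nontrivial eigenvalue, which is precisely why the diagonalizing resolvents must be additive and why slicing by $e_p=a$ does not simply eliminate a variable. When $p=3$ the relation reduces to $v_2^{3}s^{2}+(1-3\theta_0 v_2)s+(\theta_0^{3}-27a)=0$, which is only quadratic in $s$; the fixed field is then the double cover $\delta^{2}=(1-3\theta_0 v_2)^{2}-4v_2^{3}(\theta_0^{3}-27a)$ of the rational plane $k(\theta_0,v_2)$, and rationality must be extracted by a dedicated birational argument, e.g.\ exhibiting a section of a conic- or quadric-bundle fibration. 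For larger $p$ the degree of $Q$ in $s$ grows, so the cover is of higher degree and the analysis is correspondingly more delicate. The plan in general would be to induct on $p$, peeling off an affine-linear direction via the no-name lemma so as to lower the number of variables while preserving the slicing constraint $e_p=a$; carrying this through uniformly in $p$ and in $a$ is the technical heart of the statement, and is exactly the content supplied by Hajja.
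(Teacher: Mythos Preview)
Your proposal has a genuine gap at the decisive step. The paper itself does not prove Lemma~\ref{thajja}; it is quoted from Hajja. What the paper does prove directly is the consequence Lemma~\ref{tw}, and that argument is the right comparison point.

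Your reduction is correct up through the diagonalization: with the additive resolvents $\theta_j=\sum_i\zeta^{ij}x_i$ you obtain $L^{\langle\sigma\rangle}=k(\theta_0,\theta_1^{p},v_2,\dots,v_{p-1})$, and imposing $e_p=a$ cuts out a hypersurface $Q(\theta_0,s,v_2,\dots,v_{p-1})=a$. But from here you do not prove anything: you observe that for $p=3$ one is left with a quadric bundle whose rationality ``must be extracted by a dedicated birational argument,'' and for general $p$ you gesture at an induction ``peeling off an affine-linear direction'' and then say this ``is exactly the content supplied by Hajja.'' That is circular: proving the rationality of this hypersurface \emph{is} the lemma. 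The additive resolvents diagonalize the cyclic action but are blind to the multiplicative constraint $x_1\cdots x_p=a$, which is why the slicing produces a hypersurface of growing degree in $s$ that you cannot handle.

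The paper's proof of Lemma~\ref{tw} avoids this entirely by using \emph{multiplicative} resolvents in the style of Chu--Kang. One first normalizes so that $\bar x_1\cdots\bar x_p=1$ (absorbing the product into a $p$'th root), and then forms
\[
u_i=1+\zeta^{i}\bar x_1+\zeta^{2i}\bar x_1\bar x_2+\dots+\zeta^{(p-1)i}\bar x_1\cdots\bar x_{p-1},
\]
which satisfy $g_1(u_i)=\zeta^{-i}u_i/\bar x_1$. The common factor $1/\bar x_1$ cancels in the monomials $v_i=u_i^{-1}u_{i+1}^{2}u_{i+2}^{-1}$, so the fixed field is purely transcendental in $v_0,\dots,v_{p-2}$ \emph{with the product constraint already built in}. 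The residual action on the $v_i$ is again cyclic monomial, and a second pass (the substitution $1/z,\,v_0/z,\dots$) turns it into an affine action over a base on which the group acts faithfully; Theorem~1 of Hajja--Kang then finishes. The point is that the multiplicative resolvents are adapted to the relation $\prod x_i=a$, so no hypersurface ever appears. If you want to repair your argument, the missing idea is precisely this switch from additive to Chu--Kang multiplicative resolvents.
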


This implies Lemma \ref{tw} by setting $k=F(w)$ (considered as a subfield of $F(x_1,\dots,x_p)$), 
$a=w$.

\vspace{3mm}

For example for $p=3$, consider the cubic equation
$$x^3+bx^2+cx+w=0.$$
If we denote by $D$ the discriminant
$$D=18bcw-4b^3w+b^2c^2-4c^3-27w^2,$$
Lemma \ref{tw} says that $F(b,c,w)(\sqrt{D})$ is rational over $F(w)$. 
To make the proof of Theorem \ref{ts} self-contained, we give a proof of Lemma \ref{tw} in
Section \ref{sp} below.

\vspace{3mm}

We also note that there is an easy analogue of Theorem \ref{ts} for a larger group. Consider the homomorphism
$$\phi:\Sigma_{n}\wr \Z/p\r \Z/p$$
given on the normal subgroup $(\Z/p)^n$ by adding the coordinates of each element, 
and trivial on the subgroup $\Sigma_n$ which acts on $(\Z/p)^n$ by permutation of factors. Explicitly,
representing an element of $\Sigma_n\wr\Z/p$ as a tuple $(a_1,\dots, a_n;\tau)$ where $a_i\in \Z/p$, 
$\tau\in \Sigma_n$, we put
$$\phi(a_1,\dots,a_n;\tau):=\sum_{i=1}^{n} a_i.$$
Let $K_{n,p}=Ker(\phi)$.
Then $H_{np,p}$ is a $p$-Sylow subgroup of $K_{n,p}$. 

\begin{theorem}\label{tpos}
For all $n$ and $p$, the field $\C(x_1,\dots,x_{np})^{K_{n,p}}$ where $K_{n,p}$ acts by permutation
of variables is rational.
\end{theorem}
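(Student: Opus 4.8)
The plan is to exploit the semidirect product structure $K_{n,p} = N \rtimes \Sigma_n$, where $N = \{(a_1,\dots,a_n) \in (\Z/p)^n : \sum_i a_i \equiv 0\}$ consists of the block-wise cyclic shifts of total shift zero, and $\Sigma_n$ permutes the $n$ blocks of $p$ variables (that $\ker\phi$ decomposes this way is immediate: $\Sigma_n \subseteq \ker\phi$ by taking all $a_i=0$, the normal abelian part of $\ker\phi$ is $N$, and the two intersect trivially and generate). Writing the variables as $x_{ij}$ with $i\in\{1,\dots,n\}$, $j\in\Z/p$, I would first diagonalize the cyclic part by passing to the discrete Fourier coordinates $y_{ir}=\sum_{j}\zeta^{-rj}x_{ij}$, where $\zeta\in\C$ is a primitive $p$th root of unity. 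On these, $N$ acts by $y_{ir}\mapsto\zeta^{ra_i}y_{ir}$ and $\tau\in\Sigma_n$ acts by $y_{ir}\mapsto y_{\tau(i)r}$, so the permutation representation splits over $\C$ as $\bigoplus_{r\in\Z/p}M_r$ with $M_r=\mathrm{span}_{\C}(y_{1r},\dots,y_{nr})$.

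The first reduction is the no-name lemma. The summand $M_1$ is a \emph{faithful} $K_{n,p}$-representation: an element of $N$ acts on it by $\mathrm{diag}(\zeta^{a_i})$, which is trivial only for $a=0$, while $\Sigma_n$ permutes the $n$ coordinates faithfully for $n\geq 2$. Hence $\C(x_1,\dots,x_{np})^{K_{n,p}}=\C(\bigoplus_r M_r)^{K_{n,p}}$ is purely transcendental over $\C(M_1)^{K_{n,p}}$, and it suffices to prove the latter is rational. Writing $v_i=y_{i1}$, the group $N$ acts on $\C(v_1,\dots,v_n)$ by $v_i\mapsto\zeta^{a_i}v_i$ and $\Sigma_n$ permutes the $v_i$.

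Next I would take $N$-invariants. Since $N$ acts diagonally by roots of unity, $\C(v_1,\dots,v_n)^{N}$ is generated by the invariant Laurent monomials, i.e. it is $\C(\Lambda)$ for the character lattice $\Lambda=\{b\in\Z^n : b_i\equiv b_j \bmod p\}=\Z e+p\Z^n$, where $e=(1,\dots,1)$. Setting $z_i=v_i^{\,p}$ and $V=v_1\cdots v_n$, this gives $\C(v_1,\dots,v_n)^{N}=\C(z_1,\dots,z_n)(V)$ subject to the single relation $V^{p}=z_1\cdots z_n$; here $\Sigma_n$ fixes $V$ (the product is symmetric) and permutes the $z_i$.

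Finally, taking $\Sigma_n$-invariants, the decomposition $\C(M_1)^{N}=\bigoplus_{k=0}^{p-1}\C(z_1,\dots,z_n)\,V^{k}$ (with $\Sigma_n$ acting only on the coefficients) yields $\C(M_1)^{K_{n,p}}=\bigoplus_k \C(z_1,\dots,z_n)^{\Sigma_n}V^{k}=\C(\epsilon_1,\dots,\epsilon_n)[V]/(V^{p}-\epsilon_n)$, where $\epsilon_k=e_k(z_1,\dots,z_n)$. The step that looks as though it might require hard multiplicative-invariant theory, but does not, is exactly this last one: the top symmetric function $\epsilon_n=z_1\cdots z_n$ equals $V^{p}$, so adjoining $V$ merely replaces the generator $\epsilon_n$ by its $p$th root, and $\C(M_1)^{K_{n,p}}=\C(\epsilon_1,\dots,\epsilon_{n-1},V)$ is purely transcendental of degree $n$. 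Combined with the no-name reduction, this proves Theorem \ref{tpos}. The case $n=1$ is trivial since $K_{1,p}$ is then the trivial group, and the only subtlety to verify carefully is the faithfulness of $M_1$ used in the no-name step.
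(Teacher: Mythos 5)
Your proof is correct and follows essentially the same route as the paper: your Fourier coordinates $y_{i1}$ are exactly the paper's generators $t_j$ from \rref{ettlt}, your no-name reduction to the faithful summand $M_1$ is the paper's appeal to Theorem 1 of Hajja and Kang \cite{hajjak}, and your final identification of the fixed field as $\C(\epsilon_1,\dots,\epsilon_{n-1},V)$ with $V^p=\epsilon_n$ matches the paper's generators $\sigma_i(t_1^p,\dots,t_n^p)$, $i=1,\dots,n-1$, together with $\sigma_n(t_1,\dots,t_n)$. Your intermediate steps (the lattice $\Z e+p\Z^n$ and the graded decomposition $\bigoplus_k \C(z_1,\dots,z_n)V^k$) are just a more explicit justification of what the paper states directly.
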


\vspace{3mm}
\noindent 
{\bf Acknowledgement:} I am thankful to Professor M.C.Kang for helpful advice on this paper.

\vspace{3mm}

\section{Conventions and notation} \label{snot}

The main purpose of this section is to explain in detail the construction of the groups $G_{n,p}$, $H_{n,p}$, and 
our notation for the generators.

\vspace{3mm}
\begin{lemma} 
\label{lsnot}
A $p$-Sylow subgroup $G_{n,p}\subset\Sigma_n$ (the group of all permutations on $\{1,\dots,n\}$) is generated by permutations $\sigma_{i,s}$ consisting of the cycles
$$(s-1)p^{i+1}+j\mapsto (s-1)p^{i+1}+ p^i+j\mapsto\dots\mapsto (s-1)p^{i+1}+ (p-1)p^i+j$$
$$\mapsto (s-1)p^{i+1}+j$$
for $1\leq j\leq p^i$. (The remaining elements are fixed.) The parameters $i,s$ range over integers satisfying
\beg{ssi+}{1\leq s\leq \lfloor\frac{n}{p^{i+1}}\rfloor, \; 0\leq i\leq \lfloor \log_p(n)\rfloor -1.}
\end{lemma}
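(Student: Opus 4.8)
The plan is to exhibit $G_{n,p}$ as an iterated wreath product and then to match its order against the exponent $v_p(n!)=\sum_{i\geq 1}\lfloor n/p^i\rfloor$ supplied by Legendre's formula, which is exactly the largest power of $p$ dividing $|\Sigma_n|=n!$. I would argue by strong induction on $n$, the base cases $n<p$ being trivial: then there are no generators at all, $G_{n,p}=1$, and $v_p(n!)=0$.

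For the inductive step, set $q=\lfloor n/p\rfloor$ and partition $\{1,\dots,n\}$ into the consecutive blocks $B_t=\{(t-1)p+1,\dots,tp\}$ for $1\leq t\leq q$, together with the $n-pq<p$ leftover points. First I would check that every generator fixes those leftover points: each $\sigma_{i,s}$ moves only elements $\leq sp^{i+1}\leq\lfloor n/p^{i+1}\rfloor\,p^{i+1}\leq p\lfloor n/p\rfloor=pq$, since a multiple of $p^{i+1}$ is a multiple of $p$. Then I would split the generators into the base family $\sigma_{0,1},\dots,\sigma_{0,q}$ and the higher family $\{\sigma_{i,s}:i\geq 1\}$. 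The base generators are disjoint $p$-cycles, one supported on each $B_t$, so $A:=\langle\sigma_{0,1},\dots,\sigma_{0,q}\rangle\cong(\Z/p)^q$.

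The combinatorial heart---and the step I expect to cost the most bookkeeping---is the analysis of $W:=\langle\sigma_{i,s}:i\geq 1\rangle$. Because $i\geq 1$ forces each $\sigma_{i,s}$ to shift positions by a multiple of $p^i$, hence of $p$, it permutes the blocks $B_t$ \emph{rigidly}, carrying the $j$-th element of one block to the $j$-th element of another. I would then compute that the induced permutation of the index set $\{1,\dots,q\}$ is exactly $\sigma_{i-1,s}$, and that the constraint $1\leq s\leq\lfloor n/p^{i+1}\rfloor=\lfloor q/p^{i}\rfloor$ is precisely the one defining the generators of $G_{q,p}$; thus the block action sends the generating set of $W$ onto that of $G_{q,p}\subset\Sigma_q$. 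Rigidity makes the block-action homomorphism $W\to\Sigma_q$ injective, since a rigid permutation fixing every $B_t$ setwise must be the identity, and therefore $W\cong G_{q,p}$.

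Finally I would assemble the semidirect product. Rigidity again gives $\sigma_{i,s}\,\sigma_{0,t}\,\sigma_{i,s}^{-1}=\sigma_{0,t'}$, where $t'$ is the image of $t$ under the block action, so $W$ normalizes $A$ and permutes its cyclic factors exactly as $G_{q,p}\subset\Sigma_q$ permutes $\{1,\dots,q\}$. Since $A\cap W=1$, this yields $G_{n,p}=A\rtimes W\cong \Z/p\wr G_{q,p}$, the permutational wreath product, whence $|G_{n,p}|=p^{q}\,|G_{q,p}|$. Combining the inductive hypothesis $|G_{q,p}|=p^{v_p(q!)}$ with the identity $v_p(n!)=q+v_p(q!)$ (itself a consequence of $\lfloor n/p^{i}\rfloor=\lfloor q/p^{\,i-1}\rfloor$) gives $|G_{n,p}|=p^{v_p(n!)}$. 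Being a $p$-subgroup of $\Sigma_n$ whose order realizes the maximal power of $p$ dividing $n!$, the group $G_{n,p}$ is a $p$-Sylow subgroup, closing the induction.
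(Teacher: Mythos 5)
Your proof is correct, but it reaches the conclusion by a genuinely different route than the paper. The paper works top-down: it quotes as well known the description of a Sylow $p$-subgroup of $\Sigma_n$ as the product $\prod_{i=0}^{m-1}(\Z/p\wr\dots\wr\Z/p)^{\ell_{i+1}}$ attached to the base-$p$ digits of $n$, and then matches the generators against this model by a double induction --- on $n$, splitting off the leading block of size $p^m$ to get $G_{n,p}\cong(\Z/p\wr\dots\wr\Z/p)\times G_{n-p^m,p}$, and on $m$, exhibiting $G_{p^m,p}\cong \Z/p\wr G_{p^{m-1},p}$ with the coarsest cycle $\sigma_{m-1,1}$ permuting $p$ blocks of size $p^{m-1}$. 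You instead work bottom-up and self-containedly: you peel off the finest layer $A=\langle\sigma_{0,1},\dots,\sigma_{0,q}\rangle\cong(\Z/p)^q$, use rigidity of the block action to identify $W=\langle\sigma_{i,s}:i\geq 1\rangle$ with $G_{q,p}$ via $\sigma_{i,s}\mapsto\sigma_{i-1,s}$ (your floor identity $\lfloor n/p^{i+1}\rfloor=\lfloor q/p^{i}\rfloor$ correctly matches the parameter ranges in \rref{ssi+}), obtain $G_{n,p}=A\rtimes W$, and then, rather than citing the structure theorem, verify Sylowness directly from the order count $|G_{n,p}|=p^{\,q+v_p(q!)}=p^{\,v_p(n!)}$ via Legendre's formula. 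What your route buys: a single induction with no external input, and the Legendre count makes rigorous what the paper only records afterwards in the Comment following Lemma \ref{lssnot} (that the number of pairs $(i,s)$ in \rref{ssi+} equals the maximal power of $p$ dividing $n!$); moreover, your decomposition of $G_{n,p}$ as $(\Z/p)^{\lfloor n/p\rfloor}\rtimes G_{\lfloor n/p\rfloor,p}$ is exactly the device the paper itself deploys later in proving Lemma \ref{lssnot}, so your argument harmonizes with the rest of the paper. What the paper's route buys: the explicit iterated wreath-product presentation \rref{ewrwr} and the resulting unique normal form for elements as products of powers $\sigma_{i,s}^{r(i,s)}$, which the subsequent Comment and the analysis of $H_{n,p}$ reuse. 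One cosmetic caution: you write $\Z/p\wr G_{q,p}$ in the standard base-group-first convention, whereas the paper defines $G\wr H=G\ltimes H^{\ell}$ with the permuting group written first, so in the paper's notation your group would be written $G_{q,p}\wr(\Z/p)$; since you also give the semidirect product description explicitly, this is a notational mismatch, not a gap.
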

\begin{proof}
It is well known that for $n=\ell_0+\ell_1 p+\dots+\ell_m p^m$, where $0\leq\ell_i\leq p-1$ are integers and we have $m=\lfloor \log_p(n) \rfloor$
(this implies $\ell_m >0$), a $p$-Sylow subgroup of $\Sigma_n$ is isomorphic to
$$\prod_{i=0}^{m-1}  (\underbrace{\Z/p\wr \dots\wr \Z/p}_{i+1})^{\ell_{i+1}}.$$
Now, by definition, the generators 
\beg{gen}{\sigma_{m-k,1},\dots, \sigma_{m-k,p^{k-1}},\; k=1,\dots,m}
generate a group isomorphic to  $G_{p^m,p}$ and commute with all the other generators. We will show that
\beg{ewrwr}{G_{p^m,p}\cong\underbrace{\Z/p\wr \dots\wr \Z/p}_m.}
Therefore, we obtain an isomorphism 
$$G_{n,p}\cong (\underbrace{\Z/p\wr \dots\wr \Z/p}_m) \times G_{n-p^m,p},$$
by sending the generators \rref{gen} to the generators of the first factor, and the remaining generators by
$$\sigma_{m-k,s}\mapsto \sigma_{m-k,s-p^{k-1}},$$
for $s>p^{k-1}.$ Then our statement follows by induction on $n$.

To show \rref{ewrwr}, recall that for a permutation group $G$ on $\ell$ elements
and any group $H$, the wreath product $G\wr H$
is defined as the semidirect product $G\ltimes (H^\ell)$ where the action of $G$ on $H^\ell $ is by permutation
of factors. We can prove \rref{ewrwr} by induction on $m$. The statement is obvious for $m=1$. 
Now by definition, the generators
\beg{eggens}{\sigma_{m-k,1},\dots, \sigma_{m-k,p^{k-1}},\; k=2,\dots,m}
generate a group isomorphic to 
\beg{eggg}{(G_{p^{m-1},p})^{p}.}
Now 
for an element $(\alpha_1,\dots,\alpha_p)\in (G_{p^{m-1},p})^p$ presented in the generators \rref{eggens},
we have
$$\sigma_{m-1,1}\cdot(\alpha_1,\dots,\alpha_p)\cdot \sigma_{m-1,1}^{-1}=(\alpha_p,\alpha_1\dots,\alpha_{p-1}).$$
Additionally, for $i,j\in \{0,\dots,p-1\}$, $(\alpha_1,\dots,\alpha_p),(\beta_1,\dots,\beta_p)\in G_{p^{m-1},p}$,
we clearly have
$$\sigma_{m-1,1}^i\cdot(\alpha_1,\dots,\alpha_p)=\sigma_{m-1,1}^j\cdot(\beta_1,\dots,\beta_p)$$
if and only if $i=j$ and $(\alpha_1,\dots,\alpha_p)=(\beta_1,\dots,\beta_p)$, thus gving an isomorphism
$$G_{p^m,p}\cong \Z/p\wr G_{p^{m-1},p}.$$
If \rref{ewrwr} is true with 
$m$ replaced by $m-1$, then by the induction hypothesis, 
$G_{p^{m-1},p}$ is isomorphic to
$$(\underbrace{\Z/p\wr \dots\wr \Z/p}_{m-1}),$$
thus giving our statement.
\end{proof}

\vspace{3mm}
\noindent\textbf{Example:} Let $n = 1+2^2 + 2^3 =13$, $p =2$. Then 
\[ 0 \leq i \leq 2 = \lfloor \log_2(13) \rfloor -1  . \]
For $i =0$, we have $1 \leq s \leq 6 = \lfloor \frac{13}{2} \rfloor$. For $i = 1$, we have 
$1 \leq s \leq 3 = \lfloor \frac{13}{4} \rfloor$, and for $i =2$, we have $1 \leq s \leq 1 = \lfloor \frac{13}{8} \rfloor$. 
Hence, there are $6 +3 +1 = 10$ generators of the form $\sigma_{i, s}$. For example, for $\sigma_{0, 1}$, there is only 
$j = 1$, and the only cycle is 
\[ 1 \mapsto 2 \mapsto 1 . \]

The generators of $G_{13, 2}$ can be visualised as follows. 
\vspace{4mm}

\resizebox{0.9\hsize}{!}{
\begin{tikzpicture}[line width=1pt][scale=1]
\draw [fill] (1, 1) circle [radius=1pt];
\draw [fill] (2, 1) circle [radius=1pt];
\draw [fill] (3, 1) circle [radius=1pt];
\draw [fill] (4, 1) circle [radius=1pt];
\draw [fill] (5, 1) circle [radius=1pt];
\draw [fill] (6, 1) circle [radius=1pt];
\draw [fill] (7, 1) circle [radius=1pt];
\draw [fill] (8, 1) circle [radius=1pt];
\draw [fill] (9, 1) circle [radius=1pt];
\draw [fill] (10, 1) circle [radius=1pt];
\draw [fill] (11, 1) circle [radius=1pt];
\draw [fill] (12, 1) circle [radius=1pt];
\draw [fill] (13, 1) circle [radius=1pt];
\draw [->] (1, 1) .. controls (1.5, 1.3) .. (1.9, 1);
\draw [<-] (1.1, 1) .. controls (1.5, 0.7) ..(2, 1);
\node  at (-1,1) {$\sigma_{0, 1}$};

\draw [fill] (1, 0) circle [radius=1pt];
\draw [fill] (2, 0) circle [radius=1pt];
\draw [fill] (3, 0) circle [radius=1pt];
\draw [fill] (4, 0) circle [radius=1pt];
\draw [fill] (5, 0) circle [radius=1pt];
\draw [fill] (6, 0) circle [radius=1pt];
\draw [fill] (7, 0) circle [radius=1pt];
\draw [fill] (8, 0) circle [radius=1pt];
\draw [fill] (9, 0) circle [radius=1pt];
\draw [fill] (10, 0) circle [radius=1pt];
\draw [fill] (11, 0) circle [radius=1pt];
\draw [fill] (12, 0) circle [radius=1pt];
\draw [fill] (13, 0) circle [radius=1pt];
\draw [->] (3, 0) .. controls (3.5, 0.3) .. (3.9, 0);
\draw [<-] (3.1, 0) .. controls (3.5, -0.3) ..(4, 0);
\node  at (-1,0) {$\sigma_{0, 2}$};

\draw [fill] (1, -1) circle [radius=1pt];
\draw [fill] (2, -1) circle [radius=1pt];
\draw [fill] (3, -1) circle [radius=1pt];
\draw [fill] (4, -1) circle [radius=1pt];
\draw [fill] (5, -1) circle [radius=1pt];
\draw [fill] (6, -1) circle [radius=1pt];
\draw [fill] (7, -1) circle [radius=1pt];
\draw [fill] (8, -1) circle [radius=1pt];
\draw [fill] (9, -1) circle [radius=1pt];
\draw [fill] (10, -1) circle [radius=1pt];
\draw [fill] (11, -1) circle [radius=1pt];
\draw [fill] (12, -1) circle [radius=1pt];
\draw [fill] (13, -1) circle [radius=1pt];
\draw [->] (5, -1) .. controls (5.5, -0.7) .. (5.9, -1);
\draw [<-] (5.1, -1) .. controls (5.5, -1.3) ..(6, -1);
\node  at (-1,-1) {$\sigma_{0, 3}$};

\draw [fill] (1, -2) circle [radius=1pt];
\draw [fill] (2, -2) circle [radius=1pt];
\draw [fill] (3, -2) circle [radius=1pt];
\draw [fill] (4, -2) circle [radius=1pt];
\draw [fill] (5, -2) circle [radius=1pt];
\draw [fill] (6, -2) circle [radius=1pt];
\draw [fill] (7, -2) circle [radius=1pt];
\draw [fill] (8, -2) circle [radius=1pt];
\draw [fill] (9, -2) circle [radius=1pt];
\draw [fill] (10, -2) circle [radius=1pt];
\draw [fill] (11, -2) circle [radius=1pt];
\draw [fill] (12, -2) circle [radius=1pt];
\draw [fill] (13, -2) circle [radius=1pt];
\draw [->] (7, -2) .. controls (7.5, -1.7) .. (7.9, -2);
\draw [<-] (7.1, -2) .. controls (7.5, -2.3) ..(8, -2);
\node  at (-1,-2) {$\sigma_{0, 4}$};

\draw [fill] (1, -3) circle [radius=1pt];
\draw [fill] (2, -3) circle [radius=1pt];
\draw [fill] (3, -3) circle [radius=1pt];
\draw [fill] (4, -3) circle [radius=1pt];
\draw [fill] (5, -3) circle [radius=1pt];
\draw [fill] (6, -3) circle [radius=1pt];
\draw [fill] (7, -3) circle [radius=1pt];
\draw [fill] (8, -3) circle [radius=1pt];
\draw [fill] (9, -3) circle [radius=1pt];
\draw [fill] (10, -3) circle [radius=1pt];
\draw [fill] (11, -3) circle [radius=1pt];
\draw [fill] (12, -3) circle [radius=1pt];
\draw [fill] (13, -3) circle [radius=1pt];
\draw [->] (9, -3) .. controls (9.5, -2.7) .. (9.9, -3);
\draw [<-] (9.1, -3) .. controls (9.5, -3.3) ..(10, -3);
\node  at (-1,-3) {$\sigma_{0, 5}$};

\draw [fill] (1, -4) circle [radius=1pt];
\draw [fill] (2, -4) circle [radius=1pt];
\draw [fill] (3, -4) circle [radius=1pt];
\draw [fill] (4, -4) circle [radius=1pt];
\draw [fill] (5, -4) circle [radius=1pt];
\draw [fill] (6, -4) circle [radius=1pt];
\draw [fill] (7, -4) circle [radius=1pt];
\draw [fill] (8, -4) circle [radius=1pt];
\draw [fill] (9, -4) circle [radius=1pt];
\draw [fill] (10, -4) circle [radius=1pt];
\draw [fill] (11, -4) circle [radius=1pt];
\draw [fill] (12, -4) circle [radius=1pt];
\draw [fill] (13, -4) circle [radius=1pt];
\draw [->] (11, -4) .. controls (11.5, -3.7) .. (11.9, -4);
\draw [<-] (11.1, -4) .. controls (11.5, -4.3) ..(12, -4);
\node  at (-1,-4) {$\sigma_{0, 6}$};

\draw [fill] (1, -5) circle [radius=1pt];
\draw [fill] (2, -5) circle [radius=1pt];
\draw [fill] (3, -5) circle [radius=1pt];
\draw [fill] (4, -5) circle [radius=1pt];
\draw [fill] (5, -5) circle [radius=1pt];
\draw [fill] (6, -5) circle [radius=1pt];
\draw [fill] (7, -5) circle [radius=1pt];
\draw [fill] (8, -5) circle [radius=1pt];
\draw [fill] (9, -5) circle [radius=1pt];
\draw [fill] (10, -5) circle [radius=1pt];
\draw [fill] (11, -5) circle [radius=1pt];
\draw [fill] (12, -5) circle [radius=1pt];
\draw [fill] (13, -5) circle [radius=1pt];
\draw [->] (1, -5) .. controls (2, -4.7) .. (2.9, -5);
\draw [<-] (1.1, -5) .. controls (2, -5.3) ..(3, -5);
\draw [->] (2, -5) .. controls (3, -4.7) .. (3.9, -5);
\draw [<-] (2.1, -5) .. controls (3, -5.3) ..(4, -5);
\node  at (-1,-5) {$\sigma_{1, 1}$};

\draw [fill] (1, -6) circle [radius=1pt];
\draw [fill] (2, -6) circle [radius=1pt];
\draw [fill] (3, -6) circle [radius=1pt];
\draw [fill] (4, -6) circle [radius=1pt];
\draw [fill] (5, -6) circle [radius=1pt];
\draw [fill] (6, -6) circle [radius=1pt];
\draw [fill] (7, -6) circle [radius=1pt];
\draw [fill] (8, -6) circle [radius=1pt];
\draw [fill] (9, -6) circle [radius=1pt];
\draw [fill] (10, -6) circle [radius=1pt];
\draw [fill] (11, -6) circle [radius=1pt];
\draw [fill] (12, -6) circle [radius=1pt];
\draw [fill] (13, -6) circle [radius=1pt];
\draw [->] (5, -6) .. controls (6, -5.7) .. (6.9, -6);
\draw [<-] (5.1, -6) .. controls (6, -6.3) ..(7, -6);
\draw [->] (6, -6) .. controls (7, -5.7) .. (7.9, -6);
\draw [<-] (6.1, -6) .. controls (7, -6.3) ..(8, -6);
\node  at (-1,-6) {$\sigma_{1, 2}$};

\draw [fill] (1, -7) circle [radius=1pt];
\draw [fill] (2, -7) circle [radius=1pt];
\draw [fill] (3, -7) circle [radius=1pt];
\draw [fill] (4, -7) circle [radius=1pt];
\draw [fill] (5, -7) circle [radius=1pt];
\draw [fill] (6, -7) circle [radius=1pt];
\draw [fill] (7, -7) circle [radius=1pt];
\draw [fill] (8, -7) circle [radius=1pt];
\draw [fill] (9, -7) circle [radius=1pt];
\draw [fill] (10, -7) circle [radius=1pt];
\draw [fill] (11, -7) circle [radius=1pt];
\draw [fill] (12, -7) circle [radius=1pt];
\draw [fill] (13, -7) circle [radius=1pt];
\draw [->] (9, -7) .. controls (10, -6.7) .. (10.9, -7);
\draw [<-] (9.1, -7) .. controls (10, -7.3) ..(11, -7);
\draw [->] (10, -7) .. controls (11, -6.7) .. (11.9, -7);
\draw [<-] (10.1, -7) .. controls (11, -7.3) ..(12, -7);
\node  at (-1,-7) {$\sigma_{1, 3}$};

\draw [fill] (1, -8) circle [radius=1pt];
\draw [fill] (2, -8) circle [radius=1pt];
\draw [fill] (3, -8) circle [radius=1pt];
\draw [fill] (4, -8) circle [radius=1pt];
\draw [fill] (5, -8) circle [radius=1pt];
\draw [fill] (6, -8) circle [radius=1pt];
\draw [fill] (7, -8) circle [radius=1pt];
\draw [fill] (8, -8) circle [radius=1pt];
\draw [fill] (9, -8) circle [radius=1pt];
\draw [fill] (10, -8) circle [radius=1pt];
\draw [fill] (11, -8) circle [radius=1pt];
\draw [fill] (12, -8) circle [radius=1pt];
\draw [fill] (13, -8) circle [radius=1pt];
\draw [->] (1, -8) .. controls (3, -7.7) .. (4.9, -8);
\draw [<-] (1.1, -8) .. controls (3, -8.3) ..(5, -8);
\draw [->] (2, -8) .. controls (4, -7.7) .. (5.9, -8);
\draw [<-] (2.1, -8) .. controls (4, -8.3) ..(6, -8);
\draw [->] (3, -8) .. controls (5, -7.7) .. (6.9, -8);
\draw [<-] (3.1, -8) .. controls (5, -8.3) ..(7, -8);
\draw [->] (4, -8) .. controls (6, -7.7) .. (7.9, -8);
\draw [<-] (4.1, -8) .. controls (6, -8.3) ..(8, -8);
\node  at (-1,-8) {$\sigma_{2, 1}$};
\end{tikzpicture}
}

\vspace{10mm}
Here $\sigma_{0,1}, \sigma_{0,2},\sigma_{0,3},\sigma_{0,4}, \sigma_{1,1},\sigma_{1,2},\sigma_{2,1}$ generate a copy of $\Z/2\wr\Z/2\wr\Z/2$ and $\sigma_{0,5},\sigma_{0,6},\sigma_{1,3}$ generate a copy of $\Z/2\wr\Z/2$.

\vspace{3mm}


Now consider the normal subgroup $H_{n,p}\subset G_{n,p}$ generated by all the
elements $\sigma_{0,s}\sigma_{0,s^\prime}^{-1}$ where $1\leq s, s'\leq\lfloor\frac{n}{p}\rfloor$, and 
$\sigma_{i,s}$ where \rref{ssi+} holds and $i\geq 1$. 

\begin{lemma}\label{lssnot}
Letting $G_{\lfloor n/p\rfloor,p}$ act on $(\Z/p)^{\lfloor \frac{n}{p}\rfloor}$ by permutation of factors, the group
$H_{n,p}$ is isomorphic to a semidirect product of $G_{\lfloor \frac{n}{p}\rfloor,p}$ with
the subgroup of $(\Z/p)^{\lfloor \frac{n}{p}\rfloor}$ of elements whose coordinates add up to $0$. Additionally,
$H_{n,p}$ is an index $p$ subgroup of $G_{n,p}$.
\end{lemma}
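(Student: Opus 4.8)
The plan is to realize $G_{n,p}$ as a semidirect product in which the ``bottom'' generators $\sigma_{0,s}$ span the normal factor, and then to recognize $H_{n,p}$ as the subgroup obtained by replacing that factor by its sum-zero sublattice.

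First I would set up the block decomposition. The generators $\sigma_{0,s}$ ($1\le s\le\lfloor n/p\rfloor$) cyclically permute the $p$ elements of the $s$-th block $B_s=\{(s-1)p+1,\dots,sp\}$ and fix everything else, so they are commuting elements of order $p$ generating a subgroup $A\cong(\Z/p)^{\lfloor n/p\rfloor}$; I identify $\sigma_{0,s}$ with the standard basis vector $e_s$ and write $A$ additively. The partition of $\{1,\dots,n\}$ into the blocks $B_s$ together with the remaining fixed points is invariant under all the generators: the higher generators $\sigma_{i,s}$ with $i\ge 1$ carry each block to another block position-respectingly (sending the $j$-th point of a block to the $j$-th point of its image block), while the $\sigma_{0,s}$ fix each block setwise. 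Passing to the induced action on the set of blocks $\{1,\dots,\lfloor n/p\rfloor\}$, the element $\sigma_{i,s}$ acts exactly as $\sigma_{i-1,s}$ would on $\{1,\dots,\lfloor n/p\rfloor\}$, where one uses the nested-floor identity $\lfloor\lfloor n/p\rfloor/p^{i}\rfloor=\lfloor n/p^{i+1}\rfloor$ (and $\lfloor\log_p\lfloor n/p\rfloor\rfloor=\lfloor\log_p n\rfloor-1$) to match the ranges of the parameters. Hence $B=\langle\sigma_{i,s}:i\ge 1\rangle$ maps isomorphically onto a $p$-Sylow subgroup $G_{\lfloor n/p\rfloor,p}$ of $\Sigma_{\lfloor n/p\rfloor}$ under the block action, and since elements of $B$ act position-respectingly, any element of $B$ fixing every block is the identity, giving $A\cap B=\{e\}$. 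As $A$ is the kernel of the block action and $B$ a complement, I conclude $G_{n,p}=B\ltimes A$ with $B\cong G_{\lfloor n/p\rfloor,p}$ acting on $A\cong(\Z/p)^{\lfloor n/p\rfloor}$ by permuting the coordinates $e_s$; this is just the outermost layer of the iterated wreath-product structure of Lemma \ref{lsnot}.

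Next I would identify $H_{n,p}$ inside this decomposition. Under $\sigma_{0,s}\leftrightarrow e_s$, the generator $\sigma_{0,s}\sigma_{0,s'}^{-1}$ becomes $e_s-e_{s'}$, and the subgroup $V_0:=\langle e_s-e_{s'}\rangle\subseteq A$ is exactly the kernel of the augmentation $\epsilon\colon(\Z/p)^{\lfloor n/p\rfloor}\to\Z/p$, $\epsilon(a_1,\dots,a_{\lfloor n/p\rfloor})=\sum_s a_s$, that is, the sum-zero subgroup: each $e_s-e_{s'}$ lies in $\ker\epsilon$, and $\ker\epsilon$ is spanned by these differences. Since permuting coordinates preserves the coordinate sum, $B$ preserves $V_0$, so $B\ltimes V_0=\{bv:b\in B,\,v\in V_0\}$ is a subgroup of $G_{n,p}$. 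Because $H_{n,p}$ is by definition generated by $B$ and by $V_0$, and $B\ltimes V_0$ is a subgroup containing both, I get $H_{n,p}=B\ltimes V_0\cong G_{\lfloor n/p\rfloor,p}\ltimes V_0$, which is the asserted semidirect product. The index is then immediate: $[G_{n,p}:H_{n,p}]=[B\ltimes A:B\ltimes V_0]=[A:V_0]=[\,(\Z/p)^{\lfloor n/p\rfloor}:\ker\epsilon\,]=p$, since $\epsilon$ is surjective when $\lfloor n/p\rfloor\ge 1$.

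I expect the only real obstacle to lie in the first paragraph: verifying that $B$ meets $A$ trivially and acts on $A$ precisely by coordinate permutation, i.e. that the higher generators act position-respectingly on blocks and reproduce the generators of $G_{\lfloor n/p\rfloor,p}$ one level down, including the matching of parameter ranges via the nested-floor identity. Once the decomposition $G_{n,p}=G_{\lfloor n/p\rfloor,p}\ltimes(\Z/p)^{\lfloor n/p\rfloor}$ is in hand, the remaining steps are routine $\F_p$-linear algebra.
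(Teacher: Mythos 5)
Your proof is correct, and it arrives at the same structural picture as the paper --- $G_{n,p}\cong G_{\lfloor n/p\rfloor,p}\ltimes(\Z/p)^{\lfloor n/p\rfloor}$ with $H_{n,p}$ sitting over the sum-zero subgroup --- but by a genuinely different mechanism. The paper does not rebuild the complement by hand: it takes the splitting of $1\to A\to G_{n,p}\to G_{\lfloor n/p\rfloor,p}\to 1$ as already furnished by the wreath-product structure of Lemma \ref{lsnot}, defines the coordinate-sum map $h:A\to\Z/p$, notes its invariance $h(q^\alpha)=h(q)$ under the permutation action, extends it to $\widetilde h:G_{n,p}\to\Z/p$ vanishing on the complement, identifies $H_{n,p}$ with $\ker\widetilde h$, and concludes by a $3\times 3$ diagram chase that the top row is exact and the splitting lifts; normality and index $p$ then fall out of the surjectivity of $\widetilde h$ for free. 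You instead construct the complement $B=\langle\sigma_{i,s}:i\ge 1\rangle$ explicitly via the position-respecting action on blocks (in effect re-deriving the outer layer of Lemma \ref{lsnot}, floor-identity bookkeeping included) and identify $H_{n,p}=B\ltimes V_0$ by a direct generation argument. Your route buys one thing the paper elides: the paper asserts that $H_{n,p}$ is ``by definition'' the kernel of $\widetilde h$, whereas $H_{n,p}$ was actually defined in Section \ref{snot} by the generators $\sigma_{0,s}\sigma_{0,s'}^{-1}$ and $\sigma_{i,s}$, $i\geq 1$; your observation that $B\ltimes V_0$ is a subgroup containing exactly those generators supplies precisely that missing identification. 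The paper's route, in exchange, is shorter and makes normality of $H_{n,p}$ automatic. One step you should make explicit: when you say ``$A$ is the kernel of the block action,'' you have at that point only shown $A\subseteq\ker\pi$, and the full-kernel claim needs $G_{n,p}=AB$, which requires $B$ to normalize $A$ --- but that is exactly your statement that conjugation by $B$ permutes the coordinates $e_s$ (i.e., $b\sigma_{0,s}b^{-1}=\sigma_{0,\pi(b)(s)}$, valid because $b$ respects positions within blocks), so the fix is already implicit in what you wrote.
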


\begin{proof}
The subgroup $A$ of $G_{n,p}$ generated by $\sigma_{0,i}$ is a normal subgroup isomorphic to $(\Z/p)^{\lfloor n/p
\rfloor}$. Further, the quotient is isomorphic to $G_{\lfloor n/p\rfloor,p}$ (acting, instead of on single elements, on
$p$-tuples of consecutive elements of $\{1,\dots,p\lfloor n/p\rfloor\}$, each starting with a number $1\mod p$). 
Thus, the short exact sequence
$$1\r A\;\triangleleft\; G_{n,p}\r G_{\lfloor n/p\rfloor,p}\r 1$$
splits, and hence is a semidirect product. Now define a homomorphism $h:A=(\Z/p)^{\lfloor n/p\rfloor}\r\Z/p$
by adding the coordinates (in the category of abelian groups, 
this is the ``codiagonal"). If we denote, for $\alpha\in G_{\lfloor n/p\rfloor,p}$ and $q\in A$,
by $q^\alpha$ the image of $q$ under the automorphism of $A$ given by $\alpha$, then we have
$$h(q^\alpha)=h(q).$$
Therefore, the homomorphism $h$ extends uniquely to a homomorphism $\widetilde{h}:G_{n,p}\r \Z/p$
which is trivial on the subgroup $G_{\lfloor n/p\rfloor,p}$. The group $H_{n,p}$ is, by definition, the kernel of this homomorphism,
which gives us a diagram
$$\diagram
&1\dto &1\dto&&\\
1\rto & Ker(h)\dto\rto &Ker(\widetilde{h})\rto\dto & G_{\lfloor n/p\rfloor,p}\dto^\cong\rto &1\\
1\rto & A\dto^h\rto & G_{n,p}\dto^{\widetilde{h}}\rto & G_{\lfloor n/p\rfloor,p}\rto & 1\\
 &\Z/p\dto\rto^\cong &\Z/p \dto &&\\
& 1 & 1 &&
\enddiagram
$$
The columns are exact and the
middle row is exact, hence so is the top row. Further, the middle row splits, and the composition of $\widetilde{h}$
with the splitting is $0$. Thus, the splitting lifts to the top row.
\end{proof}

\vspace{3mm}
\noindent
{\bf Comment:} It follows from our proof of Lemma \ref{lsnot} that an element of $G_{n,p}$ can be 
uniquely written as a product (in any fixed order) of powers $\sigma_{i,s}^{r(i,s)}$ with $
r(i,s)\in \{0,\dots,p-1\}$, where $i,s$ are as in \rref{ssi+}.  Therefore, the group order of $G_{n,p}$ is $p$ to the
power equal to the number of pairs of integers $(i,s)$ satisfying \rref{ssi+}, which is easily checked (by an induction
mimicking that in the proof of Lemma \ref{lsnot}) to be the maximum power of $p$ dividing $n!$. Similarly, it follows from
our proof of Lemma \ref{lssnot} that an element of the group $H_{n,p}$ can be 
written uniquely as a product (in any fixed order) 
of powers $\sigma_{i,s}^{r(i,s)}$ with $
r(i,s)\in \{0,\dots,p-1\}$, where $i,s$ are as in \rref{ssi+} such that 
$$\sum_{s=1}^{\lfloor n/p\rfloor} r(0,s)\equiv 0\mod p.$$
This also shows that $|H_{n,p}|=|G_{n,p}|/p$.

\vspace{5mm}

\section{The case $p=2$.}\label{p2}
In this section, we prove Theorem \ref{ts} for $p=2$, and Theorem \ref{tbop}.

\vspace{3mm}
\noindent
{\em Proof of Theorem \ref{ts} for $p=2$:}
When $n$ is odd, the variable $x_n$ is fixed by $H_{n,2}$, so we can replace $F$ by $F(x_n)$ and $n$ by $n-1$.
Thus, we may assume $n=2m$ is even. We have $F(x_1,\dots ,x_n)=F(z_1,\dots ,z_m,t_1,\dots ,t_m)$
$$z_i=x_{2i-1}+x_{2i}$$
$$t_i=x_{2i-1}-x_{2i}$$
$$1\le i\le m.$$
Let $J_n\triangleleft H_{n,2}$ be the subgroup generated by the pairs of elements $\sigma_{0,s}\sigma_{0,s^\prime}$,
$1\leq s,s^\prime\leq m$.
Then 
$$F(x_1,\dots,x_n)^{J_n}=F(z_1,\dots, z_m, t_1^2,\dots, t_{m-1}^2,t_1t_2\cdots t_m).$$
The factor group $H_{n,2}/J_n$ is generated by
$\sigma_{i,s}$ where $i\geq 1$ and formula \rref{ssi+} holds.  Hence, it is isomorphic to the $2$-Sylow 
subgroup $G_{m,2}\subseteq \Sigma_m$ by the isomorphism $\sigma_{i,s}\mapsto\sigma_{i-1,s}
\in G_{m,2}$. The group $G_{m,2}$ acts on the $z_i$'s and $t_i$'s
by permutation. Therefore we have reduced the proof to the following statement. \qed

\begin{lemma}\label{llllemma}
Let $F$ be a field of characteristic $0$ and let $a^2\in F$ (in other words, $a$ is a square root of an element of $F$).
Let $E$ be the subfield of
\beg{efffield}{F[a](t_1,\dots,t_k,z_{1,1},\dots,z_{k,1},\dots,z_{1,\ell},\dots,z_{k,\ell})}
generated by the 
(algebraically independent) elements
$$\begin{array}{l}
t_1\cdots t_k a,t_1^2,t_2^2,\dots, t_{k-1}^2,\\
z_{1,1},z_{2,1},\dots,z_{k,1},\\
\vdots\\
z_{1,\ell},z_{2,\ell},\dots,z_{k,\ell}\\
\end{array}$$
and let $G_{k,2}$ act on $E$ by restriction of an action of $\Sigma_k$ on 
\rref{efffield} where $\Sigma_k$ acts trivially on $F[a]$, and acts by permuting the 
indexes $i$ in the $t_i$'s and the $z_{i,j}$'s for a fixed $j$.
Then $$E^{G_{k,2}}$$
is rational over $F[a]$.
\end{lemma}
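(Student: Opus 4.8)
The plan is to prove a more flexible statement by induction on $k$, which I will call the generalized lemma: for an \emph{arbitrary} field $K$ of characteristic $0$, an \emph{arbitrary} $b\in K^\times$, and integers $k\geq 1$, $\ell\geq 0$, let $E=K(u_1,\dots,u_k,\{z_{i,j}\}_{1\leq i\leq k,\,1\leq j\leq \ell},w)$ be the field in which the $u_i$ and $z_{i,j}$ are algebraically independent over $K$, with the single relation $w^2=b\,u_1\cdots u_k$, and on which $G_{k,2}$ acts by permuting the index $i$ in the $u_i$ and in the $z_{i,j}$ (for each fixed $j$) while fixing $K$ and $w$; then $E^{G_{k,2}}$ is rational over $K$. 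The statement of Lemma~\ref{llllemma} is the special case $K=F[a]$, $b=a^2$: one sets $u_i=t_i^2$ and $w=t_1\cdots t_k a$, and the relation $w^2=a^2u_1\cdots u_k$ is exactly what lets one drop $t_k^2$ from the list of generators. Generalizing the ground field $K$ and the constant $b$ is the crucial point that makes the induction close, as I explain at the end. The base case $k=1$ is immediate, since $G_{1,2}$ is trivial and $u_1=w^2/b$ makes $E=K(w,z_{1,1},\dots,z_{1,\ell})$ visibly rational over $K$.

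Assume first that $k$ is even. I peel off the bottom level of the wreath structure: let $B=\langle \sigma_{0,s}:1\leq s\leq k/2\rangle\cong(\Z/2)^{k/2}$ be the subgroup generated by the transpositions swapping index $2s-1$ with index $2s$, so that by Lemma~\ref{lsnot} the quotient $G_{k,2}/B\cong G_{k/2,2}$ acts on the $k/2$ pairs through its standard permutation representation (this is the relabeling $\sigma_{i,s}\mapsto\sigma_{i-1,s}$). First I compute $E^B$. Writing $S_s=u_{2s-1}+u_{2s}$, $P_s=u_{2s-1}u_{2s}$, $D_s=u_{2s-1}-u_{2s}$, and for each $j$ setting $a_{s,j}=z_{2s-1,j}+z_{2s,j}$, $d_{s,j}=z_{2s-1,j}-z_{2s,j}$, the generator $\sigma_{0,s}$ fixes $S_s,P_s,a_{s,j},w$ and negates $D_s$ and all $d_{s,j}$, while acting trivially on the data of the other pairs. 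Since distinct $\sigma_{0,s}$ act on disjoint variable sets, the standard description of the invariants of a product of commuting sign changes gives, after trading $D_s^2=S_s^2-4P_s$ for $P_s$,
$$E^B=K\big(\{S_s,\,P_s,\,a_{s,j},\,D_sd_{s,j}\}_{s,j},\,w\big),\qquad w^2=b\,P_1\cdots P_{k/2},$$
and a transcendence‑degree count ($k(1+\ell)$ on both sides) shows there are no further relations. Under $G_{k/2,2}$ each of the families $\{S_s\}_s$, $\{a_{s,j}\}_s$ and $\{D_sd_{s,j}\}_s$ is permuted in $s$ exactly as the $P_s$ are (the signs in $D_sd_{s,j}$ cancel), while $w$ is fixed. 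Thus $E^B$, with its residual action, is precisely an instance of the generalized lemma with the \emph{same} $K$ and $b$, block number $k/2$, and $2\ell+1$ families of $z$‑variables; by the induction hypothesis $(E^B)^{G_{k/2,2}}=E^{G_{k,2}}$ is rational over $K$. This is the concrete form of the heuristic in the introduction: the new families $S_s,a_{s,j},D_sd_{s,j}$ are the ``affine space'' factor and $(P_s,w)$ is ``another twisted torus''.

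Now assume $k$ is odd. Then the last index is fixed by every generator of $G_{k,2}$ (the orbits have $2$‑power sizes and only one singleton survives), so $G_{k,2}=G_{k-1,2}$ acts on the first $k-1$ indices only. I absorb the fixed block into the ground field: put $K'=K(u_k,z_{k,1},\dots,z_{k,\ell})$, a purely transcendental extension of $K$, and $b'=b\,u_k\in(K')^\times$. Then $w^2=b\,u_1\cdots u_k=b'\,u_1\cdots u_{k-1}$, so $E$ is exactly an instance of the generalized lemma over the ground field $K'$, with constant $b'$, the even block number $k-1$, and $\ell$ families, acted on by $G_{k-1,2}$. By the induction hypothesis (applied with block number $k-1<k$) $E^{G_{k,2}}=E^{G_{k-1,2}}$ is rational over $K'$, and $K'$ is rational over $K$; hence $E^{G_{k,2}}$ is rational over $K$. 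Taking $K=F[a]$, $b=a^2$ yields Lemma~\ref{llllemma}.

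The main obstacle is precisely the bookkeeping that forces the generalization. If one tried to prove only the stated case $b=a^2$ over $K=F[a]$, the odd step would require factoring the leftover torus coordinate $u_k$ out of $w$, i.e.\ adjoining $\sqrt{u_k}$, which is \emph{not} in $E$; passing instead to the larger class with arbitrary $(K,b)$ sidesteps this cleanly and is what lets the odd case reduce to the even one. The only computation that must be carried out with care is the determination of $E^B$ in the even step — verifying that the four listed families together with $w$ generate $E^B$ with no relations beyond $w^2=b\,P_1\cdots P_{k/2}$, and that $G_{k/2,2}$ permutes each family by the standard representation — but this is routine once one records the $\sigma_{0,s}$‑eigendecomposition of the variables above.
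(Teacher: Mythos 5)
Your proof is correct and follows essentially the same route as the paper's: the same induction on $k$, with the odd step absorbing the fixed block into an enlarged ground field (your $K'=K(u_k,z_{k,1},\dots,z_{k,\ell})$, $b'=bu_k$ is exactly the paper's $F'=F(t_k^2,z_{k,1},\dots,z_{k,\ell})$, $a'=t_ka$) and the even step computing invariants under the bottom-level transpositions, where your $S_s$, $P_s$, $a_{s,j}$, $D_sd_{s,j}$, $w$ correspond precisely to the paper's $v_i$, $u_i^2$, $w_{i,j}$, $s_{i,j}$, $u_1\cdots u_m a$, producing the same smaller instance with $2\ell+1$ families of $z$-variables. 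Your intrinsic reformulation via $w^2=b\,u_1\cdots u_k$ is a notational repackaging (arguably a cleaner one) of the paper's hypothesis $a^2\in F$, which already encodes the flexibility your ``generalized lemma'' supplies.
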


\vspace{3mm}
\noindent
{\bf Comment:} The additional variables $z_{i,j}$, and the element $a$, are introduced in order to enable a proof
by induction.

\vspace{3mm}
\noindent
\begin{proof}
Induction on $k$.
If $k=1$, $E^{G_{k,2}}=E$.
If $k>1$ is odd, $G_{k,2}=G_{k-1,2}$.
Let $F'=F(t_k^2,z_{k,1},\dots,z_{k,\ell})$, $a'=t_ka$, $t_i'=t_i$ for $i<k$, and $z_{i,j}$ for $ i<k$,
$j=1,\dots, \ell$. (Note that $F^\prime[a^\prime]$ is rational over $F[a]$.)
This reduces the statement for $k$ to the statement for $k-1$. Thus, we may assume $k=2m$ is even.
 Let $\Gamma_k\triangleleft G_{k,2}$ be the subgroup generated by the elements, $\sigma_{0,s}$,
$1\leq s\leq k$.
Then 
$$G_{k,2}/\Gamma_k\cong G_{m,2}.$$
We have 
$$E^{G_{k,2}}=(E^{\Gamma_{k}})^{G_{k,2}/\Gamma_k}.$$
Further,
\beg{efff}{\begin{array}{c}E^{\Gamma_k}=F[a](u_1u_2\cdots u_ma,u_1^2,u_2^2,\dots,u_{m-1}^2,\\
v_1,v_2,\dots,v_m,
w_{1,j},w_{2,j},\dots,w_{m,j},\\
s_{1,j},s_{2,j},\dots,s_{m,j} \mid j=1,\dots,\ell)\end{array}}
for
$$u_1=t_1t_2, u_2=t_3t_4,\dots, u_m=t_{2m-1}t_{2m},$$
$$v_1=t_1^2+t_2^2,v_2=t_3^2+t_4^2,\dots, v_m=t_{2m-1}^2+t_{2m}^2,$$
$$w_{1,j}=z_{1,j}+z_{2,j}, w_{2,j}=z_{3,j}+z_{4,j},\dots,w_{m,j}=z_{2m-1,j}+z_{2m,j},$$
$$s_{1,j}=(z_{1,j}-z_{2,j})(t_1^2-t_2^2),\dots, s_{m,j}=(z_{2m-1,j}-z_{2m,j})(t_{2m-1}^2-t_{2m}^2). $$
In fact, clearly we have $\supseteq$ in \rref{efff}. On the other hand, we see that the field $E$ is an
extension of the right hand side of \rref{efff} of degree $\leq 2^{m}$. (This is because we may use quadratic
equations to solve for $t_{2i-1}^2, t_{2i}^2$ using $u_i$, $v_i$, $i=1,\dots m$, and then the $z_{i,j}$'s are
recovered from linear equations.) Thus, equality in \rref{efff} follows from Galois theory.

Setting
$$t_i'=u_i$$
$$z_{i,j}'=w_{i,j}$$
$$z_{i,j+n}'=s_{i,j}$$
$$z_{i,2n+1}'=v_i,$$
$i=1,\dots,\ell$
reduces the statement to the induction hypothesis with $k$ replaced by $m$.

\end{proof}

\vspace{3mm}
\noindent
{\bf Comment:} The above proof came from the following idea: The $D_k$ lattice (which is 
the root lattice of the type $D_k$ Lie algebra, but that fact is of no consequence of us) consists of 
$k$-tuples of integers with an even sum. 
Note that $V=Spec((t_1\cdots t_k)^{-1}R)$ where $R$ is the subring of $k$-tuples of integers whose
sum is even. 
$F[t_1,\dots,t_k]$ generated by $t_1\cdots t_k,t_1^2,t_2^2,\dots, t_{k-1}^2$ can be identified with 
the torus over $F$ corresponding to the dual lattice $D_k^*$ of $D_k$. 
The variety $V_a=Spec((t_1\cdots t_k)^{-1}R_a)$
where $R_a$ is the subring of $F[a][t_1,\dots,t_k]$ generated by $t_1\cdots t_ka,t_1^2,t_2^2,\dots, t_{k-1}^2$
is a principal homogeneous space of $V$. The induction is based on the fact that taking the quotient of
$V_a$ under the action of the abelian group $\Gamma_k$ 
generated by $\sigma_{0,s}$, $1\leq s\leq \lfloor k/2\rfloor$, with the generators acting by permutation of
coordinates (and trivially on $Spec(F[a])$), is birationally
equivalent to a product of an affine space over $F$ with a variety of the same kind with $k$ replaced by $\lfloor k/2
\rfloor$. While we are not primarily interested in 
the case when $a\notin F$, considering this case is forced by the induction in the case of
numbers $k$ other than powers of $2$.

\vspace{5mm}

{\em Proof of Theorem \ref{tbop}:} Let $K$ be a function field over $\C$ and let $R\subset K$ be a DVR with field of fractions $K$. (Then we
automatically have $\C\subseteq R$, since the valuation of any element of $\C^\times$ is infinitely divisible, and hence is $0$ in $\Z$.)
For any $\ell\in \N$, we have a {\em residue homomorphism} 
\beg{epartial}{\partial_R:H^n(K,\Z/\ell)\r H^{n-1}(k_R,\Z/\ell)
}
where $k_R$ is the residue field of $R$.
Colliot-Th\'el\`ene and Ojanguren \cite{cothe} define
{\em unramified cohomology} by 
$$H^n_{nr}(K,\Z/\ell)=\bigcap_R Ker(\partial_R).$$ 
They prove that unramified cohomology of rational fields vanishes in degrees $\geq 1$.

For a finite extension $L\supseteq K$ and a DVR $S$ with fraction field $L$ containing $R$ and residue 
field $k^\prime$, we have a commutative diagram (see \cite{GMS}, Section 8)
$$\diagram
H^n(K,\Z/\ell)\dto_{res}\rto^{\partial_R} & H^{n-1}(k,\Z/\ell)\dto^{e\cdot res}\\
H^n(L,\Z/\ell)\rto^{\partial_S} & H^{n-1}(k^\prime,\Z/\ell)
\enddiagram
$$
where $e$ is the ramification index and $res$ is the restriction
map. This makes unramified cohomology functorial with respect to restriction:
\beg{eun}{res:H^n_{nr}(K,\Z/\ell)\r H^n_{nr}(L,\Z/\ell).
}
Now consider the norm (corestriction) 
$$N:H^n(L,\Z/\ell)\r H^n(K,\Z/\ell).$$
We have 
$$N\circ res=[L:K],$$
so we have proved the following 

\begin{lemma}
If $H^n_{nr}(L,\Z/\ell)=0$, then $[L:K]$ annihilates $H^n_{nr}(K,\Z/\ell)$. 
\end{lemma}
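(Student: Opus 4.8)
The plan is to read the statement off directly from the two structural facts already assembled above: the functoriality \rref{eun} of unramified cohomology under restriction, and the norm--restriction identity $N\circ\mathrm{res}=[L:K]$. No further geometric input about residues is needed, since the residue maps $\partial_R$ have already done their work in guaranteeing that restriction carries unramified classes to unramified classes.

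Concretely, I would argue as follows. Let $\alpha\in H^n_{nr}(K,\Z/\ell)$ be arbitrary. By \rref{eun}, its restriction $\mathrm{res}(\alpha)$ lands in $H^n_{nr}(L,\Z/\ell)$, which vanishes by hypothesis; hence $\mathrm{res}(\alpha)=0$ already in the full group $H^n(L,\Z/\ell)$, since $H^n_{nr}(L,\Z/\ell)$ is a subgroup of it. Now apply the norm $N:H^n(L,\Z/\ell)\r H^n(K,\Z/\ell)$ to this genuine zero class to get $N(\mathrm{res}(\alpha))=0$. By the identity $N\circ\mathrm{res}=[L:K]$, the left-hand side equals $[L:K]\cdot\alpha$, whence $[L:K]\cdot\alpha=0$. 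As $\alpha$ was arbitrary, $[L:K]$ annihilates $H^n_{nr}(K,\Z/\ell)$.

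There is essentially no obstacle here, as every ingredient has been established; the one point to watch is purely bookkeeping. We invoke the norm on all of $H^n(L,\Z/\ell)$ rather than on the unramified subgroup, so we do \emph{not} need to know that $N$ preserves unramified classes: we feed it the honest zero element, and the identity $N\circ\mathrm{res}=[L:K]$ holds as an equality of endomorphisms of the ambient group $H^n(K,\Z/\ell)$, which we then restrict to the unramified subgroup. Thus the only facts actually used are that $H^n_{nr}(L,\Z/\ell)$ sits inside $H^n(L,\Z/\ell)$ as a subgroup and that the composite $N\circ\mathrm{res}$ is multiplication by the degree.
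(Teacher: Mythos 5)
Your argument is correct and is exactly the paper's proof: functoriality of unramified cohomology under restriction \rref{eun} sends $\alpha$ to $\mathrm{res}(\alpha)\in H^n_{nr}(L,\Z/\ell)=0$, and then $N\circ \mathrm{res}=[L:K]$ gives $[L:K]\cdot\alpha=0$. Your bookkeeping remark --- that $N$ need only be applied to the zero class in the ambient group, so no unramified-preservation for the norm is required --- is a fair reading of why the paper's one-line deduction suffices.
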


Now consider $K=E^G$ where $G$ is a finite group and $E$ is a field of rational functions where $G$ acts
on the variables by permutations. If a group is annihilated by all primes, it is $0$. Thus, we have proved

\begin{lemma}\label{l2}
If $H^n_{nr}(E^{G_p},\Z/\ell)=0$ for all $p$-Sylow subgroups of $G$, then $H^n_{nr}(E^G,\Z/\ell)=0$.
\end{lemma}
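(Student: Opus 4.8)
The plan is to apply the preceding Lemma once for each prime, and then play the resulting annihilators off against one another. First I would recall that since $G$ acts faithfully on the rational function field $E$ by permuting variables, Artin's theorem makes $E/E^G$ a finite Galois extension with group $G$. By the Galois correspondence, $E^{G_p}$ is an intermediate field with $[E^{G_p}:E^G]=[G:G_p]$, the index of a $p$-Sylow subgroup, which is coprime to $p$.

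Next, I would set $K=E^G$ and, for each prime $p$, apply the preceding Lemma with $L=E^{G_p}$. By hypothesis $H^n_{nr}(E^{G_p},\Z/\ell)=0$, so the Lemma yields that $[G:G_p]$ annihilates the group $M:=H^n_{nr}(E^G,\Z/\ell)$. For a prime $p$ not dividing $|G|$ the $p$-Sylow subgroup is trivial, $E^{G_p}=E$ is rational, and the conclusion degenerates to the statement that $|G|$ annihilates $M$; so only the primes dividing $|G|$ carry new information.

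Finally, I would observe that the integers $\{[G:G_p]\}_{p}$ have greatest common divisor $1$: for each prime $q\mid |G|$, the number $[G:G_q]=|G|/|G_q|$ is prime to $q$, so no single prime divides all of them. Hence by B\'ezout there are integers $c_p$ with $\sum_p c_p[G:G_p]=1$, and since each summand kills $M$, we get $m=\left(\sum_p c_p[G:G_p]\right)m=0$ for every $m\in M$, i.e. $H^n_{nr}(E^G,\Z/\ell)=0$. Equivalently, decomposing the torsion group $M$ into $p$-primary components, the $p$-primary part is killed by the prime-to-$p$ integer $[G:G_p]$ and hence vanishes for every $p$. This is the precise content of the phrase that a group annihilated by numbers coprime to every prime must be $0$.

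The computation is entirely routine; the only points that require care are the identification $[E^{G_p}:E^G]=[G:G_p]$ via Galois theory and the elementary fact that the Sylow indices are globally coprime. I do not anticipate a genuine obstacle here, since all the cohomological content has already been packaged into the preceding Lemma and the relation $N\circ res=[L:K]$.
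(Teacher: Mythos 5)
Your proof is correct and takes essentially the same route as the paper: apply the preceding norm--restriction lemma to $L=E^{G_p}$ for each prime $p$, so that the pairwise globally coprime Sylow indices $[G:G_p]$ all annihilate $H^n_{nr}(E^G,\Z/\ell)$, which is exactly the paper's one-line observation that a group annihilated in this way must vanish. You merely make explicit the two steps the paper leaves implicit, namely the Galois-theoretic identification $[E^{G_p}:E^G]=[G:G_p]$ and the B\'ezout (equivalently, primary decomposition) argument.
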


In the case when $G=A_n$, the assumption of Lemma \ref{l2} for $p=2$ is verified by Theorem \ref{ts}.
The $p$-Sylow subgroup of $A_n$ for $p$ odd is a $p$-Sylow subgroup $G_{n,p}$ of $\Sigma_n$. In this case,
it is well known that $\C(x_1,\dots,x_n)^{G_{n,p}}$ is rational. (For a proof of this fact,
see \cite{KWZ}. Briefly, taking the Fischer \cite{fischer}
generators of the fixed field of $\C(x_1,\dots,x_n)$
under the abelian subgroup generated by $\sigma_{0,s}$, the permutations $\sigma_{i,s}$ for $i\geq 1$ act
on them also by permutation, while generating $G_{m,p}$ for some $m<n$. This gives a proof by induction.)
This verifies the assumption of Lemma \ref{l2} for odd primes $p$, and hence concludes the proof of Theorem
\ref{tbop}.
\qed

\vspace{5mm}

\section{The case $p>2$.} \label{sp}

\vspace{3mm}
In this section, we shall prove Theorem \ref{ts} for $p>2$ and Theorem \ref{tpos}. We include a proof of 
Lemma \ref{tw}, to make our proof of Theorem \ref{ts} self-contained.

\vspace{3mm}
\noindent
{\em Proof of Lemma \ref{tw}:}
Denote the primitive $p$th root of unity in $F$ by $\zeta_p$.
We will work in the field
$$L=F(x_1,\dots,x_p,w^{\frac{1}{p}}).$$
We have $\Z/p\times \Z/p=\Z/p\{g_1,g_2\}$ acting on $L$ where $g_1$
acts by 
\rref{1tw}
(and trivially on $w^{\frac{1}{p}}$) and $g_2$ acts trivially on $x_i$,
$i=1,\dots,p$ and
\beg{2tw}{g_2(w^{\frac{1}{p}})=(\zeta_p)^{-1}w^{\frac{1}{p}}.}
We have
$$K=L^{\Z/p\{g_1,g_2\}}.$$
Let
$$\bar{x_i}=\frac{x_i}{w^{\frac{1}{p}}}.$$
Then
$$L=F(\bar{x_1},\dots,\bar{x_p},w^{\frac{1}{p}})$$
and $g_1$ acts by
$$\bar{x_1}\mapsto\bar{x_2}\mapsto\dots\bar{x_{p}}\mapsto\bar{x_1},$$
while
$$\bar{x_1}\cdot\dots\cdot\bar{x_p}=1.$$
We will use the method of Chu and Kang \cite{ckang}
to describe $L^{\Z/p\{g_1\}}$.
Let
$$u_i=1+\zeta_p^i \bar{x_1}+\zeta_p^{2i}\bar{x_i}\bar{x_2}+\dots+\zeta_p^{i(p-1)}\bar{x_1}\cdot\dots\cdot\bar{x}_{p-1},$$
$i=0,\dots,p-1$.
Then
$$g_1(u_i)=\zeta_p^{-i}\frac{u_i}{\bar{x_1}}.$$
Therefore,
$$L^{\Z/p\{g_1\}}=F(w^{\frac{1}{p}})(u_0^pu_1^{-p},u_0^{-1}u_1^{2}u_2^{-1},u_1^{-1}u_2^2u_3^{-1},\dots,
u_{p-3}^{-1}u_{p-2}^{2}u_{p-1}^{-1}).$$
Let
$$v_i=u_i^{-1}u_{i+1}^2u_{i+2}^{-1},\; i=0,\dots,p-2.$$
Note that 
$$v_{p-2}^1v_{p-3}^2\cdot\dots\cdot v_0^{p-1}=u_0^{-p}u_1^p.$$
Therefore,
$$L^{\Z/p\{g_1\}}=F(w^{\frac{1}{p}})(v_0,\dots,v_{p-2})$$
where $g_2$ acts by \rref{2tw}
and
$$v_0\mapsto v_1\mapsto\dots\mapsto v_{p-2}\mapsto\frac{1}{v_0v_1\cdot\dots\cdot v_{p-2}}\mapsto v_0.$$
Let 
$$z=1+v_0+ v_0v_1+\dots+v_0\cdot\dots\cdot v_{p-2}.$$
Then
$$L^{\Z/p\{g_1\}}=F(w^{\frac{1}{p}})(\frac{1}{z},\frac{v_0}{z},\frac{v_0v_1}{z},\dots,\frac{v_0\cdot\dots\cdot v_{p-3}}{z})$$
while $g_2$ acts by \rref{2tw} and 
$$\frac{1}{z}\mapsto \frac{v_0}{z},\frac{v_0v_1}{z}\mapsto\dots$$
$$\mapsto\frac{v_0\cdot\dots\cdot v_{p-3}}{z}\mapsto 1-(\frac{1}{z}+\frac{v_0}{z}+\frac{v_0v_1}{z}+\dots+\frac{v_0\cdot\dots\cdot v_{p-3}}{z})\mapsto \frac{1}{z}.$$
Since $\Z/p\{g_2\}$ acts faithfully on $F(w^{\frac{1}{p}})$, by Theorem 1 of Hajja and Kang \cite{hajjak}, there exist
$\bar{z_1},\dots ,\bar{z}_{p-1}\in L^{\Z/p\{g_1\}}$
such that
$$ L^{\Z/p\{g_1\}}=F(w^{\frac{1}{p}})(\bar{z_1},\bar{z_2},\dots,\bar{z}_{p-1})$$
and
$$g_2(\bar{z_i})=\bar{z_i}.$$
Therefore,
$$K=F(w)(\bar{z_1},\bar{z_2},\dots,\bar{z}_{p-1}).$$

\qed

\vspace{3mm}
Using Lemma \ref{tw}, we now prove the following

\vspace{3mm}
\begin{lemma}\label{lwqm}
Let $F$ be a field of characteristic $0$ and let
$\zeta_p\in F,\; a^p\in F$.
Let
$E=F(t_1\cdot\dots\cdot t_ka,t_1^p,t_2^p,\dots,t_{k-1}^p)$
where $G_{k,p}$ acts on $t_1,\dots,t_k$ by permutation.
Then
$$E^{G_{k,p}}$$
is rational over $F$.
\end{lemma}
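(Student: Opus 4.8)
The plan is to prove Lemma \ref{lwqm} as the special case $\ell=0$ of a more general statement carrying auxiliary ``spectator'' variables, exactly parallel to how Lemma \ref{llllemma} treats $p=2$. For $\ell\ge 0$ introduce variables $z_{i,j}$ ($1\le i\le k$, $1\le j\le\ell$) on which $G_{k,p}$ permutes the index $i$ for each fixed $j$, and let $E_\ell\subseteq F[a](t_1,\dots,t_k,z_{1,1},\dots,z_{k,\ell})$ be generated over $F$ by $t_1\cdots t_k a$, by $t_1^p,\dots,t_{k-1}^p$, and by all the $z_{i,j}$; I would prove that $E_\ell^{G_{k,p}}$ is rational over $F$ by induction on $k$. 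Writing $T_i=t_i^p$ and $P=t_1\cdots t_k a$, one has $T_k=P^p/(a^pT_1\cdots T_{k-1})\in E_\ell$, so $E_\ell=M(P)$ is a degree-$p$ Kummer extension of the rational function field $M=F(T_1,\dots,T_k,\{z_{i,j}\})$ over $F$, with $P^p=a^pT_1\cdots T_k\in M$ and $a^p\in F$ a scalar; here $G_{k,p}$ permutes the $T_i$ and the $z_{i,j}$ and fixes $P$. The point of keeping $M$ over $F$ (not over $F[a]$) is that $a$ itself never enters as a field element.

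For the inductive step set $m=\lfloor k/p\rfloor$ and let $\Gamma_k\triangleleft G_{k,p}$ be the elementary abelian subgroup generated by the level-$0$ elements $\sigma_{0,1},\dots,\sigma_{0,m}$, so that $G_{k,p}/\Gamma_k\cong G_{m,p}$ (as in the proof of Lemma \ref{lssnot}). Since $\Gamma_k$ fixes $P$ and acts faithfully on $M$ with $P\notin M$, a degree count gives $E_\ell^{\Gamma_k}=M^{\Gamma_k}(P)$. Each $\sigma_{0,s}$ cyclically permutes the block $\{T_{(s-1)p+1},\dots,T_{sp}\}$ together with the corresponding $z$'s; the $m$ blocks and the $G_{k,p}$-fixed ``remainder'' variables $T_i,z_{i,j}$ with $i>pm$ involve disjoint variables, so $M^{\Gamma_k}$ is the free compositum over $F$ of the block-invariant fields and the remainder variables. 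For a single block I would apply Lemma \ref{tw} to its $p$ variables $T_{(s-1)p+i}$ (rationality over $F(w_s)$, where $w_s=\prod_i T_{(s-1)p+i}$) and then the no-name lemma (Miyata's theorem, as used in \cite{hajjak}) to adjoin the permuted spectators, obtaining the block-invariant field in the form $F(w_s)(\eta_{s,1},\dots,\eta_{s,N})$ with $N=p(1+\ell)-1$, the construction being given by a single formula valid in every block.

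Hence $E_\ell^{\Gamma_k}=F(\{w_s\},\{\eta_{s,r}\},\{T_i,z_{i,j}\}_{i>pm})(P)$, and I claim this with its residual $G_{m,p}$-action is again of the general form for parameter $m$: the $w_s$ are permuted by $G_{m,p}$ and serve as the new $\tilde t_s^{\,p}$ (take $\tilde t_s=\prod_i t_{(s-1)p+i}$, so $\tilde t_s^{\,p}=w_s$); the uniformly chosen $\eta_{s,r}$ are permuted in $s$ and become the new spectators; the remainder variables are $G_{m,p}$-fixed and constitute the new base field $\tilde F=F(\{T_i,z_{i,j}\}_{i>pm})$; and $P=\tilde a\,\tilde t_1\cdots\tilde t_m$ with $\tilde a=a\,t_{pm+1}\cdots t_k$ satisfying $\tilde a^{\,p}=a^pT_{pm+1}\cdots T_k\in\tilde F$. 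The induction hypothesis over the base $\tilde F$ then makes $E_\ell^{G_{k,p}}=(E_\ell^{\Gamma_k})^{G_{m,p}}$ rational over $\tilde F$, and since $\tilde F$ is rational over $F$, transitivity of pure transcendence yields rationality over $F$; the base case $k=1$ is immediate, as $G_{1,p}$ is trivial and $E_\ell=F(P,z_{1,1},\dots,z_{1,\ell})$. This also explains why the conclusion is rationality over $F$ rather than over $F[a]$: $a$ need not lie in the fixed field, even though (as in the $p=2$ comment) the recursion is forced through twists with $a\notin F$.

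The main obstacle is the single-block computation in the inductive step: I must realize the invariants of the diagonal cyclic $\Z/p$-action on the $T$'s and the spectator $z$'s rationally over $F(w_s)$, and --- decisively for the recursion --- choose the resulting free generators by one uniform formula in every block, so that the quotient $G_{m,p}$ genuinely permutes them and the descended system is literally of the same shape. Tracking the twist (verifying that $\tilde a^{\,p}$ lands in the new $G_{m,p}$-fixed base $\tilde F$ while the remainder variables absorb the leftover factors of $P$) is the bookkeeping that makes the induction close; everything else is degree counting and transitivity of rationality.
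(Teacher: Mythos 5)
Your proposal is correct and follows essentially the same route as the paper's proof: induction on $k$ via the subgroup $\Gamma_k$ generated by the $\sigma_{0,s}$, block-wise application of Lemma \ref{tw} with uniformly chosen generators so that the quotient $G_{m,p}$ permutes them, and Theorem 1 of Hajja--Kang \cite{hajjak} to replace the permuted auxiliary generators by invariant ones. The differences are only bookkeeping: the paper keeps the induction statement spectator-free by absorbing the new invariant variables $z'_{i,j}$ into the base field $F$ at each step and handles $k=mp+i$ by a separate preliminary reduction (whereas you carry spectator variables through the induction, as the paper does for $p=2$ in Lemma \ref{llllemma}, and fold the remainder variables into $\tilde F$ and $\tilde a$ within the step), and your base case should be all $k<p$, where $G_{k,p}$ is trivial, rather than just $k=1$.
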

\begin{proof}
Induction on $k$.
If $k<p$, $E^{G_{k,p}}=E$. If
$k=mp+i$
 where $m>0$, $i=1,\dots,p-1$,
 $G_{k,p}=G_{mp,p}$.
Let
$$F'=F(t_{mp+1}^p,\dots,t_{mp+i}^p),$$
$a'=t_{mp+1}\cdot\dots\cdot t_{mp+i}a, t_{i}'=t_i$ for $i=1,\dots,mp$.
This reduces us from $k$ to $mp$. Thus assume $k=mp$.
Let
$\Gamma_k\triangleleft G_{k,p}$ be the subgroup generated
by the $p$-cycles $\sigma_{0,s}$.
Then 
$$G_{k,p}/\Gamma_k\cong G_{m,p}.$$
We have
$$E^{G_{k,p}}=(E^{\Gamma_k})^{G_{k,p}/\Gamma_k}.$$
For $i=1,\dots,m$, apply Lemma \ref{tw} to
$$x_1=t_{(i-1)p+1}^p,\dots,x_p=t_{ip}^p.$$
Let
$$u_i=t_{(i-1)p+1}\cdot\dots\cdot t_{ip}.$$
By Lemma \ref{tw},
$$F(t_{(i-1)p+1}^p,\dots,t_{ip}^p)^{\Z/p\{\sigma_{0,i}\}}=F(u_i^p,z_{i,1} ,\dots,z_{i,p-1}).$$
Therefore,
$$E^{\Gamma_k}=F(u_1u_2\cdot\dots\cdot u_ma,$$
$$u_1^p,u_2^p,\dots,u_{m-1}^p,$$
$$z_{1,1},\dots,z_{1,p-1},$$
$$\vdots$$
$$z_{m,1},\dots,z_{m,p-1}).$$
Now $G_{m,p}$ acts on the $z_{i,j}$'s by permutation of the $i$ index and faithfully
on
$$K=F(u_1u_2\cdot\dots\cdot u_ma,u_1^p,\dots,u_{m-1}^p).$$
By Theorem 1 of \cite{hajjak},
$$E^{\Gamma_k}=K(z_{1,1}',\dots,z_{1,p-1}',$$
$$ \vdots $$
$$z_{m,1}',\dots,z_{m,p-1}')$$
where $G_{m,p}$ fixes the generators $z_{i,j}'$.
Thus, our statement follows from the
induction hypothesis with $F$ replaced by
$$F(z_{1,1}',\dots, z_{1,p-1}',$$
$$ \vdots$$
$$ z_{m,1}',\dots,z_{m,p-1}').$$
\end{proof}

\vspace{3mm} 
\noindent
{\em Proof of Theorem \ref{ts} for $p>2$:}
Let $n=mp+i$ where $m>0,\; i=0,\dots,p-1$. Let 
\beg{ettlt}{t_{j}=x_{(j-1)p+1}+\zeta_{p}^{-1}x_{(j-1)p+2}+\dots +\zeta_{p}^{1-p}x_{jp},\; j=1,\dots m.}
Then the action of $H_{n,p}$ on $F(x_1,\dots, x_n)$ restricts to an action on the subfield
$$L=F(t_1,t_2,\dots,t_m)$$
where $\sigma_{0,k}$ acts by $t_k\mapsto \zeta_p t_k$, and
$t_j\mapsto t_j$ for $j\neq k,\; k=1,\dots,m$. The generators
$\sigma_{\ell,s}$ 
for $\ell>0$ act by 
permutation on the $t_k$'s.

Additionally, 
\beg{elplin}{F(x_1,\dots,x_{mp+i})=L(x_j\mid j=1,\dots, n, \; p\nmid j)}
and the action of $H_{n,p}$ on the generators on the right hand side of \rref{elplin} is
affine over $L$.
By Theorem 1 of Hajja and Kang \cite{hajjak}, $F(x_1,\dots,x_n)^{H_{n,p}}$ is rational over $L^{H_{n,p}}$. 
Thus, we may restrict attention to the action of $H_{n,p}$ on $L$.  In particular, without loss of generality, 
$i=0$ and $n=mp$.
Now if we denote by $J_n\lhd H_{n,p}$ 
the subgroup generated by
$\sigma_{0,s}\sigma_{0,s'}^{-1}$ then 
$$F(t_1,t_2,\dots,t_m)^{J_n}=F(t_1\cdot\dots\cdot t_m,t_1^p,\dots,t_{m-1}^p).$$
We have $H_{n,p}/J_n\cong G_{m,p}$, so our statement follows from Lemma \ref{lwqm} with $a=1$.
\qed

\vspace{3mm}
\noindent
{\em Proof of Theorem \ref{tpos}:} Similarly as in our proof of Theorem \ref{ts} for $p>2$,
by Theorem 1 of Hajja and Kang \cite{hajjak}, we may again instead consider
the action of $K_{n,p}$ on the subfield $L=\C(t_1,\dots,t_n)$ where the $t_j$'s are
defined by \rref{ettlt}. Then $\Sigma_n$ acts by permutation, and
$\sigma_{0,i}$ acts by $t_i\mapsto \zeta_p t_i$, $t_j\mapsto t_j$ for $j\neq i$. Thus, we must prove
that
\beg{ecsigma}{\C(t_1^p,t_2^p,\dots,t_{n-1}^p, t_1\cdot\dots\cdot t_n)^{\Sigma_n}
}
is rational. However, \rref{ecsigma} is the field of rational functions on the generators
$$\sigma_n(t_1,\dots,t_n),$$
$$\sigma_i(t_1^p,\dots,t_n^p),\; i=1,\dots,n-1$$
where $\sigma_i$ are the elementary symmetric polynomials.
\qed

\vspace{5mm}

\end{document}